\newtheorem{theorem}{Theorem}
\newtheorem{corollary}[theorem]{Corollary}
\newtheorem{definition}[theorem]{Definition}
\newtheorem{lemma}[theorem]{Lemma}
\newtheorem{proposition}[theorem]{Proposition}
\newtheorem{remark}[theorem]{Remark}
\newenvironment{proof}[1][Proof]{\noindent\textbf{#1.} }{\ \rule{0.5em}{0.5em}}
\begin{document}

\begin{center}
{\Large Weighted norm inequalities in Lebesgue spaces with Muckenhoupt
weights and some applications to operators}\textbf{\bigskip }

{\Large Ramazan Akg\"{u}n}$^{1}$\footnotetext[1]{%
The author was supported by Balikesir University Scientific Research Project
2019/61.}\textbf{\bigskip }
\end{center}

\begin{quotation}
\textbf{Abstract }In the present work we give a simple method to obtain
weighted norm inequalities in Lebesgue spaces $L_{p,\gamma }$ with
Muckenhoupt weights $\gamma $. This method is different from celebrated
Extrapolation or Interpolation Theory. In this method starting point is
uniform norm estimates of special form. Then a procedure give desired
weighted norm inequalities in $L_{p,\gamma }.$ We apply this method to
obtain several convolution type inequalities. As an application we consider
a difference operator of type $\Delta _{v}^{r}:=\left( \mathbb{I}-\mathfrak{T%
}_{v}\right) ^{r}$ where $\mathbb{I}$ is the identity operator, $r\in 
\mathbb{N}$ and%
\begin{equation*}
\mathfrak{T}_{v}f\left( x\right) :=\frac{1}{v}\int\nolimits_{x}^{x+v}f\left(
t\right) dt,\quad x\in \left[ -\pi ,\pi \right] ,\text{\quad }v>0,\quad 
\mathfrak{T}_{0}:=\mathbb{I}.
\end{equation*}%
We obtain main properties of $\Delta _{v}^{r}f$ for functions $f$ given in $%
L_{p,\gamma }$, $1\leq p<\infty $, with weights $\gamma $ satisfying the
Muckenhoupt's $A_{p}$ condition. Also we consider some applications of
difference operator $\Delta _{v}^{r}$ in these spaces. In particular, we
obtain that difference $\left\Vert \Delta _{v}^{r}f\right\Vert _{p,\gamma }$
\ is a useful tool for computing the smoothness properties of functions
these spaces. It is obtained that $\left\Vert \Delta _{v}^{r}f\right\Vert
_{p,\gamma }$ is equivalent to Peetre's \textit{K}-functional.

\textbf{2010 MSC} Primary 46E30; Secondary 42A85, 42B25, 42B35.

\textbf{Keywords} Muckenhoupt Weight, Weighted Norm Inequality, Weighted
Lebesgue space, Difference Operator, \textit{K}-functional, Modulus of
Smoothness.
\end{quotation}

\section{Introduction}

Let $\mathbb{T}:=\left[ -\pi ,\pi \right] $. A function $\gamma :\mathbb{%
T\rightarrow }\left[ 0,\infty \right] $ will be called weight if $\gamma $
is measurable and positive a.e. on $\mathbb{T}$. We denote $\gamma \left(
A\right) :=\int\nolimits_{A}\gamma \left( u\right) du$ for a measurable set $%
J\subseteqq \mathbb{T}$. An integrable, 2$\pi $-periodic weight function $%
\gamma ,$ defined on $\mathbb{T},$ satisfies Muckenhoupt's $A_{1}$ condition
(briefly $\gamma \in A_{1}$) if%
\begin{equation*}
\left[ \gamma \right] _{1}:=\sup\limits_{J\subseteqq \mathbb{T}}\frac{\gamma
\left( J\right) }{mes(J)}\underset{x\in J}{ess}\sup \left( \gamma \left(
x\right) \right) ^{-1}<\infty ;
\end{equation*}%
satisfy the Muckenhoupt's $A_{p}$, $1<p<\infty $, condition (briefly $\gamma
\in A_{p}$) if%
\begin{equation*}
\left[ \gamma \right] _{p}:=\sup\limits_{J\subseteqq \mathbb{T}}\frac{\gamma
\left( J\right) }{mes(J)}\left( \frac{1}{mes(J)}\int\nolimits_{J}\left[
\gamma \left( u\right) \right] ^{\frac{-1}{p-1}}du\right) ^{p-1}<\infty 
\text{,\quad }\left( 1<p<\infty \right)
\end{equation*}%
holds with some (Muckenhoupt) constant $\left[ \gamma \right] _{p}$
independent of $J$.

For example, (i) $\left\vert x\right\vert ^{\alpha }\in A_{p}$ iff $%
-1<\alpha <p-1$; (ii) $\left\vert \sin \theta \right\vert ^{\alpha }\in
A_{p} $ iff $-1<\alpha <p-1$;\ $\ $(iii) Densities of harmonic measures in
relation to the Lebesgue surface measure on the boundaries of sufficiently
regular domains satisfy the condition $A_{p}.$ Some other examples are given
e.g. in the article \cite[p.2097]{dyOs80}.

For a weight $\gamma $ on $\mathbb{T}$, we denote by $L_{p,\gamma }$, $1\leq
p<\infty $ the class of real valued measurable functions, defined on $%
\mathbb{T},$ such that 
\begin{equation*}
\int\nolimits_{\mathbb{T}}\left\vert f\left( x\right) \right\vert ^{p}\gamma
\left( x\right) dx<\infty \text{ for }1\leq p<\infty .
\end{equation*}%
For $f\in L_{p,\gamma }$, $1\leq p<\infty $ we set%
\begin{equation*}
\left\Vert f\right\Vert _{p,\gamma }^{p}:=\int\nolimits_{\mathbb{T}%
}\left\vert f\left( x\right) \right\vert ^{p}\gamma \left( x\right) dx.
\end{equation*}%
When $\gamma \equiv 1$ we will set $L_{p}:=L_{p,1}$. Let $C\left( \mathbb{T}%
\right) $ be the collection of continuous functions $f:\mathbb{T}\rightarrow 
\mathbb{R}$ with $\Vert f\Vert _{C\left( \mathbb{T}\right) }:=\max \left\{
\left\vert f\left( x\right) \right\vert :x\in \mathbb{T}\right\} <\infty .$
If $p\in \lbrack 1,\infty )$ and $\gamma \in A_{p}$, then embeddings%
\begin{equation*}
C\left( \mathbb{T}\right) \hookrightarrow L_{p,\gamma }\hookrightarrow L_{1},
\end{equation*}%
hold, because%
\begin{eqnarray}
\left\Vert \cdot \right\Vert _{1} &\leq &\left[ \gamma \right] _{p}^{\frac{1%
}{p}}\left\Vert \gamma \right\Vert _{1}^{-\frac{1}{p}}\left\Vert \cdot
\right\Vert _{p,\gamma }\text{ for }f\in L_{p,\gamma }\text{, and}
\label{boun} \\
\left\Vert \cdot \right\Vert _{p,\gamma } &\leq &\left\Vert \gamma
\right\Vert _{1}^{\frac{1}{p}}\left\Vert \cdot \right\Vert _{\infty }\text{,
for }f\in C\left( \mathbb{T}\right) .  \label{bn2}
\end{eqnarray}

Rest of the paper is organized as follows. The next subsection contain a
simple method to obtain weighted norm inequalities. In section 2 we give
several applications of new method. In subsection 2.1 we give proof of
weighted norm inequalities for convolutions. Also we provide several kernels
fitting our convolution inequality. In subsection 2.2 we obtain boundedness
of the one-sided Steklov operator. In subsection 2.3 we obtain some
properties of Difference Operator based on one-sided Steklov mean. In
subsection 2.4 we define modulus of smoothness and obtain an equivalence
relation with \textit{K}-functional and modulus of smoothness.

\subsection{Method for obtaining weighted norm inequalities}

In this subsection only, notations $\mathbf{c}_{i}$ ($i\in \mathbb{N}$) will
stand for generic positive constants and these can be change in different
places.

To obtain weighted norm inequality%
\begin{equation}
\left\Vert g\right\Vert _{p,\gamma }\leq \mathbf{c}_{1}\left\Vert
f\right\Vert _{p,\gamma }  \label{WNI}
\end{equation}%
for $1\leq p<\infty $, $\gamma \in A_{p}$, $f\in L_{p,\gamma }$, with some
positive constants $\mathbf{c}_{1}$ depending only on $p$ and $\left[ \gamma %
\right] _{p}$, we define an intermediate operator%
\begin{equation*}
F_{f}:\mathbb{T\rightarrow }C\left( \mathbb{T}\right) \text{, \ \ }u\mapsto
F_{f}\left( u\right) \text{ \ \ where }f\in L_{p,\gamma }
\end{equation*}%
such that%
\begin{equation*}
\left\Vert g\right\Vert _{p,\gamma }\leq \mathbf{c}_{2}\left\Vert
F_{g}\left( u\right) \right\Vert _{C\left( \mathbb{T}\right) }\text{ and }%
\left\Vert F_{f}\left( u\right) \right\Vert _{C\left( \mathbb{T}\right)
}\leq \mathbf{c}_{3}\left\Vert f\right\Vert _{p,\gamma }
\end{equation*}%
for some positive constants $\mathbf{c}_{2},\mathbf{c}_{3}$ depending only
on $p$ and $\left[ \gamma \right] _{p}$. Now, if we assume uniform norm
estimate%
\begin{equation}
\left\Vert F_{g}\left( u\right) \right\Vert _{C\left( \mathbb{T}\right)
}\leq \mathbf{c}_{4}\left\Vert F_{f}\left( u\right) \right\Vert _{C\left( 
\mathbb{T}\right) }  \label{UE}
\end{equation}%
holds, then, we obtain desired weighted norm inequality (\ref{WNI}) with $%
\mathbf{c}_{1}=\mathbf{c}_{4}\mathbf{c}_{2}\mathbf{c}_{3}$.

We observe below that, in many concrete situations, (\ref{UE}) is easy to
obtain.

Suppose $1\leq p<\infty $, $\gamma \in A_{p}$, $f\in L_{p,\gamma }$,%
\begin{equation*}
p^{\prime }:=\left\{ 
\begin{tabular}{cc}
$\frac{p}{p-1}$ & for $p>1,$ \\ 
$\infty $ & for $p=1,$%
\end{tabular}%
\right. \quad \gamma ^{\prime }:=\left\{ 
\begin{tabular}{cc}
$\gamma ^{1-p^{\prime }}$ & for $p>1,$ \\ 
$1$ & for $p=1.$%
\end{tabular}%
\right.
\end{equation*}%
For an $G\in L_{p^{\prime },\gamma ^{\prime }}$,\quad $\left\Vert
G\right\Vert _{p^{\prime },\gamma ^{\prime }}\leq 1$ we define%
\begin{equation}
F_{f,G}\left( u\right) :=\int\nolimits_{\mathbb{T}}f\left( x+u\right)
\left\vert G\left( x\right) \right\vert dx,\qquad u\in \mathbb{T}\text{.}
\label{fbb}
\end{equation}

\begin{theorem}
\label{Fu}If $1\leq p<\infty $, $\gamma \in A_{p}$ and $f\in L_{p,\gamma }$,
then the function $F_{f}\left( u\right) $, defined in (\ref{fbb}), is
uniformly continuous on $\mathbb{T}.$
\end{theorem}

\begin{definition}
(\cite[p.96]{HH})Let $\mathbb{N}:\mathbb{=}\left\{ 1,2,3,\cdots \right\} $
be natural numbers and $\mathbb{N}_{0}:=\mathbb{N\cup }\left\{ 0\right\} $.

(a) A family $Q$ of measurable sets $E\subset \mathbb{R}$ is called locally $%
N$-finite ($N\in \mathbb{N}$) if 
\begin{equation*}
\sum_{E\in Q}\chi _{E}\left( x\right) \leq N
\end{equation*}%
almost everywhere in $\mathbb{R}$ where $\chi _{U}$ is the characteristic
function of the set $U$.

(b) A family $Q$ of open bounded sets $U\subset \mathbb{R}$ is locally $1$%
-finite if and only if the sets $U\in Q$ are pairwise disjoint.

(c) Let $U\subset $\textsc{T} be \ a measurable set and%
\begin{equation*}
A_{U}f:=\frac{1}{\left\vert U\right\vert }\int\limits_{U\cap \text{\textsc{T}%
}}\left\vert f\left( t\right) \right\vert dt.
\end{equation*}

(d) For a family $Q$ of open sets $U\subset $\textsc{T} we define averaging
operator by 
\begin{equation*}
T_{Q}:L_{loc}^{1}\left( T\right) \rightarrow L^{0}\left( T\right) ,
\end{equation*}%
\begin{equation*}
T_{Q}f\left( x\right) :=\sum_{U\in Q}\chi _{U\cap \text{\textsc{T}}}\left(
x\right) A_{U}f,\quad x\in \text{\textsc{T}},
\end{equation*}%
where $L^{0}\left( \text{\textsc{T}}\right) $ is the set of measurable
functions on \textsc{T}.
\end{definition}

For a measurable set $A\subset \mathbb{R}$, symbol $\left\vert A\right\vert $
will represent the Lebesgue measure of $A$.

We need a duality result given below. We define $\langle f,g\rangle
:=\int\nolimits_{\mathbb{T}}f(x)g(x)dx$ when integral exists.

\begin{lemma}
\label{dual}(\cite[p.352]{au20})If $1\leq p<\infty $, $\gamma \in A_{p}$,
then, dual of $L_{p,\gamma }$ is $L_{p^{\prime },\gamma ^{\prime }}$ and%
\begin{equation}
\left\Vert f\right\Vert _{p,\gamma }=\underset{G\in L_{p^{\prime },\gamma
^{\prime }}}{\sup }\left\{ \left\vert \langle f,G\rangle \right\vert
:\left\Vert G\right\Vert _{p^{\prime },\gamma ^{\prime }}\leq 1\right\} .
\label{zzz}
\end{equation}
\end{lemma}

Let $c_{0}^{\prime \prime \prime }:=\left\Vert G\right\Vert _{\infty }.$

\begin{definition}
We denote by $S\left( \mathbb{T}\right) $ the collection of simple functions
on $\mathbb{T}$. We set $S_{0}\left( \mathbb{T}\right) :=\left\{ f\in
S\left( \mathbb{T}\right) :f\text{ \ has a compact support in }\mathbb{T}%
\right\} $.
\end{definition}

From Corollary 3.2.14 of \cite[p.79]{DHHR11}, Remark 3.11 of \cite[p.14]{d-h}
and proof of Lemma 6.7 of \cite[p.23]{d-h} we have the following corollary.

\begin{corollary}
\label{pr1}(Corollary 3.2.14 of \cite[p.79]{DHHR11})Let $1\leq p<\infty $,
and $\gamma \in A_{p}$. Then supremum in (\ref{zzz}) is unchanged if we
replace the condition $G\in L_{p^{\prime },\omega ^{\prime }}$ by $G\in
S\left( \mathbb{T}\right) $ or $G\in S_{0}\left( \mathbb{T}\right) .$
\end{corollary}

We define constant $\mathbf{c}_{0}$:=$\max \left\{ \mathbf{c}_{0}^{\prime },%
\left[ \omega \right] _{A_{1}}\right\} $ where%
\begin{equation*}
\mathbf{c}_{0}^{\prime }:=\mathfrak{E}\left( 2\left( 1+\mathbf{c}%
_{0}^{\prime \prime }\right) \left( 1+\pi \right) \right) ^{p}\left(
1+\gamma \left( B\left( 0,1\right) \right) \right) \left( p^{\prime }\left[
\omega \right] _{A_{p}}\right) ^{\frac{1}{p-1}}
\end{equation*}%
and absolute constant $\mathfrak{E}>1$ comes from $p$-Buckley's (when $p>1$)
univariate estimate of Hardy Littlewood maximal function, and $\mathbf{c}%
_{0}^{\prime \prime }:=\left[ \gamma \right] _{p}^{\frac{1}{p}}\left\Vert
\gamma \right\Vert _{1}^{-\frac{1}{p}}.$

\begin{theorem}
\label{Aver}Suppose that $1\leq p<\infty $, $\gamma \in A_{p}$, and $f\in
L_{p,\gamma }$. If $Q$ is $1$-finite family of open bounded subsets of $%
\mathbb{R}$ having Lebesgue measure $1$, then, the averaging operator $T_{Q}$
is uniformly bounded in $L_{p,\gamma }$, namely,%
\begin{equation*}
\left\Vert T_{Q}f\right\Vert _{p,\gamma }\leq \mathbf{c}_{0}\left\Vert
f\right\Vert _{p,\gamma }
\end{equation*}%
holds.
\end{theorem}

In case of $\gamma \equiv 1$ Theorem \ref{Aver} is obtained in variable
exponent Lebesgue spaces by Diening Harjulehto H\"{a}st\"{o} R\r{u}\v{z}i%
\v{c}ka \cite{DHHR11} and in Musielak-Orlicz spaces by Harjulehto H\"{a}st%
\"{o} \cite{HH}.

\begin{definition}
\label{ddd}(\cite{cuf})Let $B$ be a measurable set $B\subseteq \mathbb{T}$, $%
\phi \in L^{1}\left( \mathbb{T}\right) $ and $\int_{\mathbb{T}}\phi \left(
t\right) dt=1$. For each $t>0$ we define $\phi _{t}\left( x\right) =\frac{1}{%
t}\phi \left( \frac{x}{t}\right) $. Such a sequence $\left\{ \phi
_{t}\right\} $ will be called approximate identity. A function%
\begin{equation*}
\tilde{\phi}\left( x\right) =\sup\limits_{\left\vert y\right\vert \geq
\left\vert x\right\vert }\left\vert \phi \left( y\right) \right\vert
\end{equation*}%
will be called radial majorant of $\phi .$ If $\tilde{\phi}\in L^{1}\left( 
\mathbb{T}\right) $, then, sequence $\left\{ \phi _{t}\right\} $ will be
called potential-type approximate identity.
\end{definition}

Using the same proof of of Corollary 4.6.6 of \cite[p.130]{DHHR11} we can
obtain the following theorem.

\begin{theorem}
\label{bt}(Corollary 4.6.6 of \cite[p.130]{DHHR11})Suppose $1\leq p<\infty $%
, $\gamma \in A_{p}$, $f\in L_{p,\gamma }$, and $\phi $ is a potential-type
approximate identity with radial majorant $\tilde{\phi}\in L^{1}\left( 
\mathbb{T}\right) $. Then, for any $t>0$,%
\begin{equation*}
\left\Vert f\ast \phi _{t}\right\Vert _{p,\gamma }\leq C\left\Vert \tilde{%
\phi}\right\Vert _{1}\left\Vert f\right\Vert _{p,\gamma }
\end{equation*}%
and%
\begin{equation*}
\underset{t\rightarrow 0}{\lim }\left\Vert f\ast \phi _{t}-f\right\Vert
_{p,\gamma }=0
\end{equation*}%
hold with a positive constant $C$ depend only on $p,\gamma .$
\end{theorem}

Using Theorem \ref{Aver}, Corollary 4.6.6 of \cite[p.130]{DHHR11} and
Theorem \ref{bt} we have the following proposition.

\begin{proposition}
\label{pr1+}Let $1\leq p<\infty $, and $\gamma \in A_{p}$. Then%
\begin{equation*}
\frac{1}{12\mathbf{c}_{0}}\left\Vert f\right\Vert _{p,\gamma }\leq
\sup_{G\in L_{p^{\prime },\gamma ^{\prime }}\cap C^{\infty }:\left\Vert
G\right\Vert _{p^{\prime },\gamma ^{\prime }}\leq 1}\int_{\mathbb{T}%
}\left\vert f\left( x\right) \right\vert \left\vert G\left( x\right)
\right\vert dx\leq 2\left\Vert f\right\Vert _{p,\gamma }
\end{equation*}%
holds for all $f\in L_{p,\gamma }$.
\end{proposition}

\begin{theorem}[Main Theorem]
\label{tra}Let $1\leq p<\infty $, $\gamma \in A_{p}$, $f$, $g\in L_{p,\gamma
}$. In this case, if inequality%
\begin{equation*}
\left\Vert F_{g,G}\right\Vert _{C\left( \mathbb{T}\right) }\leq \mathbf{c}%
_{5}\left\Vert F_{f,G}\right\Vert _{C\left( \mathbb{T}\right) }
\end{equation*}%
holds for some absolute constant $\mathbf{c}_{5}$, then%
\begin{equation}
\left\Vert g\right\Vert _{p,\gamma }\leq 24\mathbf{c}_{0}\mathbf{c}%
_{0}^{\prime \prime }\mathbf{c}_{0}^{\prime \prime \prime }\mathbf{c}%
_{5}\left\Vert f\right\Vert _{p,\gamma }.  \label{rrr}
\end{equation}
\end{theorem}

As a result Transference Result (TR) we obtain the following result related
to boundedness of translations of Generalized Steklov Mean.

\begin{theorem}
\label{stek} We suppose that $\gamma $ is a 2$\pi $-periodic weight on $%
\mathbb{T}$ so that $\gamma $ belongs to the class $A_{p}$, $1\leq p<\infty $%
. If $0<\lambda <\infty $ and $\tau \in \mathbb{R}$, then family of
translations of Generalized Steklov Mean Operators $\{S_{\lambda ,\tau
}\}_{1\leq \lambda <\infty }$, defined by%
\begin{equation*}
S_{\lambda ,\tau }f(x)=\lambda \int_{x+\tau -1/(2\lambda )}^{x+\tau
+1/(2\lambda )}f(u)du,\qquad x\in \mathbb{T},
\end{equation*}%
is uniformly bounded (in $\lambda $ and $\tau $) in $L_{p,\gamma }$, namely,%
\begin{equation*}
\left\Vert S_{\lambda ,\tau }f\right\Vert _{p,\gamma }\leq 24\mathbf{c}_{0}%
\mathbf{c}_{0}^{\prime \prime }\mathbf{c}_{0}^{\prime \prime \prime
}\left\Vert f\right\Vert _{p,\gamma }
\end{equation*}%
holds for any $\lambda \in \left( 0,\infty \right) $ and $\tau \in \mathbb{R}
$. Also,%
\begin{equation}
\lim_{\lambda ^{-1}+\tau \rightarrow 0}\left\Vert S_{\lambda ,\tau
}f-f\right\Vert _{p,\gamma }=0.  \label{yknsk}
\end{equation}
\end{theorem}

Note that, for $0<v<\infty $, $\lambda :=1/v$ and $\tau =0$ we get%
\begin{equation*}
T_{v}f\left( x\right) :=S_{1/v,0}f\left( x\right) =\frac{1}{v}%
\int\nolimits_{-v/2}^{v/2}f\left( x+t\right) dt
\end{equation*}%
and from (\ref{yknsk})%
\begin{equation}
\left\Vert T_{v}f-f\right\Vert _{p,\gamma }\rightarrow 0\text{, \ \ as }%
v\rightarrow 0+.  \label{gfh}
\end{equation}

We can give proofs of the results, given above, in order.

\begin{proof}[Proof of Theorem \protect\ref{Fu}]
Since $C\left( \mathbb{T}\right) $ is a dense subset of $L_{p,\gamma }$, we
consider functions $f\in C\left( \mathbb{T}\right) $ firstly. Take $%
\varepsilon >0$ and $u_{1},u_{2}\in \mathbb{T}$. By (uniform) continuity of $%
f$ and (\ref{bn2}), there exist $\delta :=\delta \left( \varepsilon \right)
>0$ so that%
\begin{equation*}
\left\vert f\left( \cdot +u_{1}\right) -f\left( \cdot +u_{2}\right)
\right\vert <\frac{\varepsilon }{\mathbf{c}_{0}^{\prime \prime }},
\end{equation*}%
when $\left\vert u_{1}-u_{2}\right\vert <\delta $. Then, for $\left\vert
u_{1}-u_{2}\right\vert <\delta $, $u_{1},u_{2}\in \mathbb{T}$ we have%
\begin{equation*}
\left\vert F_{f,G}\left( u_{1}\right) -F_{f,G}\left( u_{2}\right)
\right\vert \leq \int\nolimits_{\mathbb{T}}\left\vert f\left( x+u_{i}\right)
-f\left( x+u_{2}\right) \right\vert \left\vert G\left( x\right) \right\vert
dx
\end{equation*}%
\begin{equation*}
\leq \underset{x,u_{1},u_{2}\in \mathbb{T}}{\sup }\left\vert f\left(
x+u_{1}\right) -f\left( x+u_{2}\right) \right\vert \left\Vert G\right\Vert
_{1}
\end{equation*}%
\begin{equation*}
\leq \frac{\varepsilon }{\mathbf{c}_{0}^{\prime \prime }}\mathbf{c}%
_{0}^{\prime \prime }\left\Vert G\right\Vert _{p^{\prime },\gamma ^{\prime
}}=\varepsilon
\end{equation*}%
and the conclusion of Theorem \ref{Fu} follows. For the general case $f\in
L_{p,\gamma }$ there exists an $g\in C\left( \mathbb{T}\right) $ so that 
\begin{equation*}
\left\Vert f-g\right\Vert _{p,\gamma }<\xi 4^{-1}\left( \mathbf{c}%
_{0}^{\prime \prime \prime }\mathbf{c}_{0}^{\prime \prime }\right) ^{-1}
\end{equation*}%
for any $\xi >0.$ Then%
\begin{equation*}
\left\vert F_{f,G}\left( u_{1}\right) -F_{f,G}\left( u_{2}\right)
\right\vert =\left\vert F_{f,G}\left( u_{1}\right) -F_{g,G}\left(
u_{1}\right) \right\vert +\left\vert F_{g,G}\left( u_{1}\right)
-F_{g,G}\left( u_{2}\right) \right\vert
\end{equation*}%
\begin{equation*}
+\left\vert F_{g,G}\left( u_{2}\right) -F_{f,G}\left( u_{2}\right)
\right\vert \leq \left\vert F_{f-g,G}\left( u_{1}\right) \right\vert
+\left\vert F_{g-f,G}\left( u_{2}\right) \right\vert +\xi /2
\end{equation*}%
\begin{equation*}
\leq 2\mathbf{c}_{0}^{\prime \prime \prime }\mathbf{c}_{0}^{\prime \prime
}\left\Vert f-g\right\Vert _{p,\gamma }+\xi /2=\xi .
\end{equation*}%
As a result $F_{f}$ is uniformly continuous on $\mathbb{T}.$
\end{proof}

\begin{proof}[Proof of Main Theorem \protect\ref{tra}]
Suppose $1\leq p<\infty $, $\gamma \in A_{p}$. Let $0\leq f$,$g\in
L_{p,\gamma }$. If $\left\Vert g\right\Vert _{p,\gamma }=0$, then the result
(\ref{rrr}) is obvious. So, we assume that $\left\Vert g\right\Vert
_{p,\gamma }>0$. Using hypotesis we get%
\begin{equation*}
\left\Vert F_{g,G}\right\Vert _{C\left( \mathbb{T}\right) }\leq \mathbf{c}%
_{5}\left\Vert F_{f,G}\right\Vert _{C\left( \mathbb{T}\right) }
\end{equation*}%
\begin{equation*}
=\mathbf{c}_{5}\left\Vert \int\nolimits_{\mathbb{T}}f\left( x+u\right)
\left\vert G\left( x\right) \right\vert dx\right\Vert _{C\left( \mathbb{T}%
\right) }=\mathbf{c}_{5}\max_{u\in \mathbb{T}}\int\nolimits_{\mathbb{T}%
}f\left( x+u\right) \left\vert G\left( x\right) \right\vert dx
\end{equation*}%
\begin{equation*}
\leq \mathbf{c}_{5}\max_{u\in \mathbb{T}}\left\Vert f\left( \cdot +u\right)
\right\Vert _{1}\left\Vert G\right\Vert _{\infty }=\mathbf{c}_{5}\left\Vert
f\right\Vert _{1}\mathbf{c}_{0}^{\prime \prime \prime }\leq \mathbf{c}_{5}%
\mathbf{c}_{0}^{\prime \prime }\mathbf{c}_{0}^{\prime \prime \prime
}\left\Vert f\right\Vert _{p,\gamma }.
\end{equation*}

Using Proposition \ref{pr1}, for any $\varepsilon >0$ there exists $G\in
L_{p^{\prime },\gamma ^{\prime }}$ with $\left\Vert G\right\Vert _{p^{\prime
},\gamma ^{\prime }}\leq 1$ such that%
\begin{equation*}
\int\nolimits_{\mathbb{T}}g\left( x\right) \left\vert G\left( x\right)
\right\vert dx\geq \left\Vert g\right\Vert _{p,\gamma }-\varepsilon
\end{equation*}%
and one can find%
\begin{eqnarray*}
\left\Vert F_{g,G}\right\Vert _{C\left( \mathbb{T}\right) } &\geq
&\left\vert F_{g}\left( 0\right) \right\vert \geq \frac{1}{12\mathbf{c}_{0}}%
\int\nolimits_{\mathbb{T}}g\left( x\right) \left\vert G\left( x\right)
\right\vert dx \\
&\geq &\frac{1}{12\mathbf{c}_{0}}\left\Vert g\right\Vert _{p,\gamma
}-\varepsilon .
\end{eqnarray*}%
Now taking limit $\varepsilon \rightarrow 0+$ we have%
\begin{equation*}
\left\Vert F_{g,G}\right\Vert _{C\left( \mathbb{T}\right) }\geq \frac{1}{12%
\mathbf{c}_{0}}\left\Vert g\right\Vert _{p,\gamma }
\end{equation*}%
and hence%
\begin{equation*}
\left\Vert g\right\Vert _{p,\gamma }\leq 12\mathbf{c}_{0}\left\Vert
F_{g,G}\right\Vert _{C\left( \mathbb{T}\right) }\leq 12\mathbf{c}_{0}\mathbf{%
c}_{5}\left\Vert F_{f}\right\Vert _{C\left( \mathbb{T}\right) }\leq \mathbf{c%
}_{5}12\mathbf{c}_{0}\mathbf{c}_{0}^{\prime \prime }\mathbf{c}_{0}^{\prime
\prime \prime }\left\Vert f\right\Vert _{p,\gamma }.
\end{equation*}%
For general case $f$,$g\in L_{p,\gamma }$we have%
\begin{equation*}
\left\Vert g\right\Vert _{p,\gamma }\leq \mathbf{c}_{5}24\mathbf{c}_{0}%
\mathbf{c}_{0}^{\prime \prime }\mathbf{c}_{0}^{\prime \prime \prime
}\left\Vert f\right\Vert _{p,\gamma }.
\end{equation*}
\end{proof}

\begin{proof}[Proof of Theorem \protect\ref{stek}]
Using $F_{S_{\lambda ,\tau }f,G}=S_{\lambda ,\tau }F_{f,G}$, and%
\begin{equation*}
\left\Vert F_{S_{\lambda ,\tau }f,G}\right\Vert _{C\left( \mathbb{T}\right)
}=\left\Vert S_{\lambda ,\tau }F_{f,G}\right\Vert _{C\left( \mathbb{T}%
\right) }\leq \left\Vert F_{f,G}\right\Vert _{C\left( \mathbb{T}\right) }
\end{equation*}%
we conclude from TR that%
\begin{equation*}
\left\Vert S_{\lambda ,\tau }f\right\Vert _{p,\gamma }\leq 24\mathbf{c}_{0}%
\mathbf{c}_{0}^{\prime \prime }\mathbf{c}_{0}^{\prime \prime \prime
}\left\Vert f\right\Vert _{p,\gamma }.
\end{equation*}

We can consider (\ref{yknsk}). Since $C\left( \mathbb{T}\right) $ is a dense
subset of $L_{p,\gamma }$, we consider $f\in C\left( \mathbb{T}\right) $
first. Using (\ref{bn2}) we have%
\begin{equation*}
\left\Vert S_{\lambda ,\tau }f-f\right\Vert _{p,\gamma }\leq \left\Vert
\gamma \right\Vert _{1}^{\frac{1}{p}}\left\Vert S_{\lambda ,\tau
}f-f\right\Vert _{\infty }\rightarrow 0\text{, \ \ as }\lambda ^{-1}+\tau
\rightarrow 0.
\end{equation*}%
Now, for the general case $f\in L_{p,\gamma }$ there exists an $g\in C\left( 
\mathbb{T}\right) $ so that 
\begin{equation*}
\left\Vert f-g\right\Vert _{p,\gamma }<\frac{\varepsilon }{2\left( 1+24%
\mathbf{c}_{0}\mathbf{c}_{0}^{\prime \prime }\mathbf{c}_{0}^{\prime \prime
\prime }\right) }
\end{equation*}%
for any $\varepsilon >0.$ Then, for this $\varepsilon $, there exist an $%
N_{\lambda }\in \mathbb{R}^{+}$ and a $\delta >0$ such that for any $\lambda
\geq N_{\lambda }$ and $\left\vert \tau \right\vert <\delta $ one gets $%
\left\Vert S_{\lambda ,\tau }g-g\right\Vert _{p,\gamma }<\frac{\varepsilon }{%
2}$ and, hence,%
\begin{equation*}
\left\Vert S_{\lambda ,\tau }f-f\right\Vert _{p,\gamma }\leq \left\Vert
S_{\lambda ,\tau }f-S_{\lambda ,\tau }g\right\Vert _{p,\gamma }+\left\Vert
S_{\lambda ,\tau }g-g\right\Vert _{p,\gamma }+\left\Vert g-f\right\Vert
_{p,\gamma }
\end{equation*}%
\begin{equation*}
\leq (1+24\mathbf{c}_{0}\mathbf{c}_{0}^{\prime \prime }\mathbf{c}%
_{0}^{\prime \prime \prime })\left\Vert f-g\right\Vert _{p,\gamma
}+\left\Vert S_{\lambda ,\tau }g-g\right\Vert _{p,\gamma }
\end{equation*}%
\begin{equation*}
\leq \frac{\varepsilon }{2}+\frac{\varepsilon }{2}=\varepsilon .
\end{equation*}
\end{proof}

\section{Applications}

Starting from this section, we will use notations $C_{i}$, $i=1,2,3,...$ for
certain positive constants and these will not change in different places
until the end of the paper. Let $C_{i}^{m}:=\left( C_{i}\right) ^{m}$ for $%
m=1,2,3,....$

Several results are known about specific convolution inequalities in papers 
\cite{Akgeja}, \cite{isrYir16}, \cite{ka}, \cite{sh96}, \cite{vol17}. In the
following section we give a method for weighted convolution inequalities.

\subsection{Weighted Convolution Inequalities}

Let $\lambda \geq 1$, $k_{\lambda }=k_{\lambda }(x)$ be 2$\pi $-periodic
function defined on $\mathbb{T}$, such that

\begin{equation}
\int_{\mathbb{T}}k_{\lambda }(x)dx\leq C_{1}<\infty .  \label{cup}
\end{equation}%
We define the class of operators $K_{\lambda }f\left( x\right)
:=\int\limits_{\mathbb{T}}f(x-t)k_{\lambda }(t)dt$ \ for $\lambda \geq 1$.
Then class of operators $\{K_{\lambda }f\}_{1\leq \lambda <\infty }$ is
uniformly bounded (in $\lambda $) in $L_{p,\gamma }$ for $1\leq p<\infty $
and $\gamma \in A_{p}.$

\begin{theorem}
\label{conv} Let $\lambda >0$, $k_{\lambda }=k_{\lambda }(x)$ be 2$\pi $%
-periodic function defined on $\mathbb{T}$, such that (\ref{cup}) to hold.
We suppose that $\gamma $ is a 2$\pi $-periodic weight on $\mathbb{T}$ so
that $\gamma $ is belong to the class $A_{p}$, $1\leq p<\infty $. Then,

(i) $F_{K_{\lambda }f}\left( \cdot \right) =K_{\lambda }F_{f}\left( \cdot
\right) $ \ and

(ii) the class of operators $\{K_{\lambda }f\}_{1\leq \lambda <\infty }$ is
uniformly bounded (in $\lambda $) in $L_{p,\gamma }$, namely,%
\begin{equation*}
\left\Vert K_{\lambda }f\right\Vert _{p,\gamma }\leq 24\mathbf{c}_{0}\mathbf{%
c}_{0}^{\prime \prime }\mathbf{c}_{0}^{\prime \prime \prime }C_{1}\left\Vert
f\right\Vert _{p,\gamma }.
\end{equation*}
\end{theorem}

Specific examples of kernels satisfying the conditions (\ref{cup}) are among
others, Steklov , Poisson , Ces\`{a}ro , Jackson and Fej\'{e}r kernels.

\begin{proof}[Proof of Theorem \protect\ref{conv}]
(i) For any $u\in \mathbb{T},$%
\begin{eqnarray*}
F_{K_{\lambda }f}\left( u\right) &=&\int\limits_{\mathbb{T}}K_{\lambda
}f\left( x+u\right) G\left( x\right) dx \\
&=&\int\limits_{\mathbb{T}}\int\limits_{\mathbb{T}}f(x+u-t)k_{\lambda
}(t)dtG\left( x\right) dx
\end{eqnarray*}%
\begin{equation*}
=\int\limits_{\mathbb{T}}\int\limits_{\mathbb{T}}f(x+u-t)G\left( x\right)
dxk_{\lambda }(t)dt
\end{equation*}%
\begin{equation*}
=\int\limits_{\mathbb{T}}F_{f}\left( u-t\right) k_{\lambda }(t)dt=K_{\lambda
}F_{f}\left( u\right) .
\end{equation*}

(ii) Clearly%
\begin{equation*}
\left\Vert K_{\lambda }g\right\Vert _{C\left( \mathbb{T}\right) }=\left\Vert
\int\limits_{\mathbb{T}}g(x-t)k_{\lambda }(t)dt\right\Vert _{C\left( \mathbb{%
T}\right) }\leq C_{1}\left\Vert g\right\Vert _{C\left( \mathbb{T}\right) }
\end{equation*}%
for any $g\in C\left( \mathbb{T}\right) $.

Using (i) we obtain%
\begin{equation*}
\left\Vert F_{K_{\lambda }f}\right\Vert _{C\left( \mathbb{T}\right)
}=\left\Vert K_{\lambda }F_{f}\right\Vert _{C\left( \mathbb{T}\right) }\leq
C_{1}\left\Vert F_{f}\right\Vert _{C\left( \mathbb{T}\right) }.
\end{equation*}%
Now Theorem \ref{tra} give%
\begin{equation*}
\left\Vert K_{\lambda }f\right\Vert _{p,\gamma }\leq 24\mathbf{c}_{0}\mathbf{%
c}_{0}^{\prime \prime }\mathbf{c}_{0}^{\prime \prime \prime }C_{1}\left\Vert
f\right\Vert _{p,\gamma }
\end{equation*}%
as required.
\end{proof}

Here we can give several corollaries of Theorem \ref{conv}.

i) \textsc{Steklov Operator}: Let $\Delta _{\lambda }:=\left[ -1/(2\lambda
),1/(2\lambda )\right] $, $\lambda \geq 1$ and%
\begin{equation*}
k_{\lambda }(x):=\left\{ 
\begin{tabular}{ll}
$\lambda $ & , when $x\in \Delta _{\lambda },$ \\ 
$0$ & , when $x\in \mathbb{T}\setminus \Delta _{\lambda }.$%
\end{tabular}%
\right.
\end{equation*}%
We extend $k_{\lambda }$ to $\mathbb{R}:\mathbb{=}(-\infty ,\infty )$ with
period 2$\pi $. In this case Steklov operator $S_{\lambda }f$ is represented
as%
\begin{equation*}
S_{\lambda }f(x)=\int_{\mathbb{T}}f(x-t)k_{\lambda }(t)dt=\lambda
\int_{x-1/(2\lambda )}^{x+1/(2\lambda )}f(u)du.
\end{equation*}%
Since%
\begin{equation*}
\int_{\mathbb{T}}k_{\lambda }(x)dx=1
\end{equation*}%
we obtain the following corollary.

\begin{corollary}
If $\gamma $ is a 2$\pi $-periodic weight on $\mathbb{T}$ so that $\gamma $
is belong to the class $A_{p}$, $1\leq p<\infty $, then the sequence of
Steklov Operators $\{S_{\lambda }f\}_{1\leq \lambda <\infty }$ is uniformly
bounded (in $\lambda $) in $L_{p,\gamma }$, namely,%
\begin{equation*}
\left\Vert S_{\lambda }f\right\Vert _{p,\gamma }\leq 24\mathbf{c}_{0}\mathbf{%
c}_{0}^{\prime \prime }\mathbf{c}_{0}^{\prime \prime \prime }\left\Vert
f\right\Vert _{p,\gamma }.
\end{equation*}
\end{corollary}

Some results of this type can be found in \cite{ak18}, \cite{istes16}, \cite%
{prSvSc17}.

2) \textsc{Jackson Operator}: Let $n\in \mathbb{N}$ and $\mathcal{T}_{n}$%
\textit{\ }be the class of real trigonometric polynomials of degree not
greater than $n$. Let%
\begin{equation}
D_{n}f(x):=\frac{1}{\pi }\int\nolimits_{\mathbb{T}}f(x-t)J_{2,\lfloor \frac{n%
}{2}\rfloor +1}(t)dt\in \mathcal{T}_{n}  \label{Dn}
\end{equation}%
be the Jackson operator (polynomial) where $J_{2,n}$ is the Jackson kernel%
\begin{equation*}
J_{2,n}(x):=\frac{1}{\varkappa _{2,n}}\left( \frac{\sin (nx/2)}{\sin (x/2)}%
\right) ^{4},\quad \varkappa _{2,n}:=\frac{1}{\pi }\int\nolimits_{\mathbb{%
-\pi }}^{\pi }\left( \frac{\sin (nt/2)}{\sin (t/2)}\right) ^{4}dt.
\end{equation*}%
It is known that (\cite[p.147]{DzSh})%
\begin{equation*}
2^{-3/2}n^{3}3\leq \varkappa _{2,n}\leq 2^{-3/2}n^{3}5
\end{equation*}%
and $J_{2,n}$ satisfies relations%
\begin{equation*}
\left. 
\begin{array}{c}
\frac{1}{\pi }\int_{\mathbb{T}}J_{2,n}(u)du=1;\quad \left\vert
J_{2,n}(u)\right\vert \mathbb{\leq }\frac{2\sqrt{2}}{3}\pi ^{4},\quad
n^{-3/4}\leq u\leq \pi ,\medskip \\ 
\max_{u\in \mathbb{T}}\left\vert J_{2,n}(u)\right\vert \mathbb{\leq }\left( 
\frac{\pi }{2}\right) ^{4}n;\quad \frac{1}{\pi }\int_{0}^{\pi }uJ_{2,n}(u)du%
\mathbb{\leq }\frac{5}{2n},\medskip%
\end{array}%
\right\}
\end{equation*}%
and property (\ref{cup}) is satisfied.

Now, Theorem \ref{conv} gives

\begin{corollary}
\label{unif}If $\gamma $ is a 2$\pi $-periodic weight on $\mathbb{T}$ so
that $\gamma $ is belong to the class $A_{p}$, $1\leq p<\infty $, then the
sequence of Jackson operators $\{D_{n}f\}_{1\leq n<\infty }$ is uniformly
bounded (in $n$) in $L_{p,\gamma }$, namely,%
\begin{equation*}
\left\Vert D_{n}f\right\Vert _{p,\gamma }\leq \pi 24\mathbf{c}_{0}\mathbf{c}%
_{0}^{\prime \prime }\mathbf{c}_{0}^{\prime \prime \prime }\left\Vert
f\right\Vert _{p,\gamma }.
\end{equation*}
\end{corollary}

3) \textsc{Fejer Operator}: We suppose that $\gamma $ is a 2$\pi $-periodic
weight on $\mathbb{T}$ so that $\gamma $ is belong to the class $A_{p}$, $%
1\leq p<\infty $. Let%
\begin{equation}
f\left( x\right) \backsim \frac{a_{0}\left( f\right) }{2}+\sum%
\limits_{k=1}^{\infty }\left( a_{k}\left( f\right) \cos kx+b_{k}\left(
f\right) \sin kx\right) =:\sum\limits_{k=0}^{\infty }A_{k}\left( x,f\right)
\label{aa}
\end{equation}%
be the Fourier series of $f\in W_{p,\gamma }^{1}$ and $S_{n}\left( f\right)
:=S_{n}\left( x,f\right) :=\sum\limits_{k=0}^{n}A_{k}\left( x,f\right)
,\quad n=0,1,2,\ldots $ be the partial sum of the Fourier series (\ref{aa}).
Fejer Operator (the first arithmetic mean) is defined as%
\begin{equation*}
F_{n}(f,\cdot ):=\frac{1}{n+1}\left\{ S_{0}(\cdot ,f)+S_{1}(\cdot
,f)+...+S_{n}(\cdot ,f)\right\}
\end{equation*}%
with Fej\'{e}r kernel%
\begin{equation*}
k_{n}\left( u\right) =\frac{2}{\left( n+1\right) }\left[ \frac{\sin \left(
\left( n+1\right) u/2\right) }{\sin \left( u/2\right) }\right] ^{2}
\end{equation*}%
satisfies (\ref{cup}) since%
\begin{equation*}
\frac{1}{\pi }\int_{\mathbb{T}}k_{n}(u)du=1,\quad \max_{t\in \mathbb{T}%
}k_{n}(t)\leq n+1.
\end{equation*}%
If we take $k_{\lambda }\left( u\right) =k_{n}\left( u\right) $ for $n\leq
\lambda <n+1$ we have%
\begin{equation*}
F_{\lambda }(f,x)=\frac{1}{\pi }\int\nolimits_{\mathbb{T}}f(x-t)k_{\lambda
}\left( t\right) (t)dt.
\end{equation*}

\begin{corollary}
If $\gamma $ is a 2$\pi $-periodic weight on $\mathbb{T}$ so that $\gamma $
is belong to the class $A_{p}$, $1\leq p<\infty $, then $F_{\lambda }f$ is
uniformly bounded (in $\lambda $) in $L_{p,\gamma }$, namely,%
\begin{equation*}
\left\Vert F_{\lambda }(f,\cdot )\right\Vert _{p,\gamma }\leq \pi 24\mathbf{c%
}_{0}\mathbf{c}_{0}^{\prime \prime }\mathbf{c}_{0}^{\prime \prime \prime
}\left\Vert f\right\Vert _{p,\gamma },\quad \forall \lambda \geq 1\text{.}
\end{equation*}
\end{corollary}

\begin{corollary}
\label{berns}(Weighted Bernstein's Inequality) Let $n,r\in \mathbb{N}$, $%
1\leq p<\infty $, $\gamma \in A_{p}$ and $U_{n}\in \mathcal{T}_{n}$. Then%
\begin{equation}
\left\Vert U_{n}^{(r)}\right\Vert _{p,\gamma }\leq \pi 48\mathbf{c}_{0}%
\mathbf{c}_{0}^{\prime \prime }\mathbf{c}_{0}^{\prime \prime \prime
}n^{r}\left\Vert U_{n}\right\Vert _{p,\gamma }.  \label{ber}
\end{equation}
\end{corollary}

\begin{proof}[Proof of Corollary \protect\ref{berns}]
Since%
\begin{equation*}
\left\vert U_{n}^{\prime }(\cdot )\right\vert \leq 2nF_{n-1}(\cdot
,\left\vert U_{n}\right\vert ),
\end{equation*}%
we obtain weighted Bernstein's inequality%
\begin{equation*}
\left\Vert U_{n}^{\prime }\right\Vert _{p,\gamma }\leq \pi 48\mathbf{c}_{0}%
\mathbf{c}_{0}^{\prime \prime }\mathbf{c}_{0}^{\prime \prime \prime
}n\left\Vert U_{n}\right\Vert _{p,\gamma }.
\end{equation*}%
Then%
\begin{equation*}
\left\Vert U_{n}^{(r)}\right\Vert _{p,\gamma }\leq \pi 48\mathbf{c}_{0}%
\mathbf{c}_{0}^{\prime \prime }\mathbf{c}_{0}^{\prime \prime \prime
}n^{r}\left\Vert U_{n}\right\Vert _{p,\gamma }
\end{equation*}%
for $r\in \mathbb{N}.$
\end{proof}

4) \textsc{Dela Vallee Poussin Operator}: We define, for $n\in \mathbb{N}%
\cup \{0\}$, De la Vall\'{e}e-Poussin mean as%
\begin{equation*}
V_{n}(f,\cdot )=\frac{1}{n}\sum\limits_{i=0}^{n-1}S_{n+i}(\cdot ,f).
\end{equation*}%
Since%
\begin{equation*}
V_{n}(f,\cdot )=2F_{2n-1}(f,\cdot )-F_{n-1}(f,\cdot )
\end{equation*}%
Theorem \ref{conv} gives the following corollary.

\begin{corollary}
\label{VP}If $\gamma $ is a 2$\pi $-periodic weight on $\mathbb{T}$ so that $%
\gamma $ is belong to the class $A_{p}$, $1\leq p<\infty $, then $V_{n}f$ is
uniformly bounded (in $n$) in $L_{p,\gamma }$, namely,%
\begin{equation*}
\left\Vert V_{n}f\right\Vert _{p,\gamma }\leq 72\pi \mathbf{c}_{0}\mathbf{c}%
_{0}^{\prime \prime }\mathbf{c}_{0}^{\prime \prime \prime }\left\Vert
f\right\Vert _{p,\gamma },\quad \forall n\in \mathbb{N}\text{.}
\end{equation*}
\end{corollary}

\subsection{One Sided Steklov Operator}

In this section we will consider the uniform boundedness of the family of
one sided Steklov Operators $\mathfrak{T}_{v}$ defined below. Let

\begin{equation*}
\mathfrak{T}_{v}f\left( x\right) :=\frac{1}{v}\int\nolimits_{x}^{x+v}f\left(
t\right) dt,\quad x\in \mathbb{T},\quad v\in \left( 0,1\right) \text{ and }%
\mathfrak{T}_{0}f:=f.
\end{equation*}%
In this case, $0<v<\infty $, $\lambda :=1/v$ and $\tau =0$ then,%
\begin{equation*}
S_{1/v,v/2}f\left( \cdot \right) =\frac{1}{v}\int\nolimits_{0}^{v}f\left(
\cdot +t\right) dt=\mathfrak{T}_{v}f\left( \cdot \right) .
\end{equation*}%
Hence we get the following theorem.

\begin{theorem}
\label{stekl op norm}We suppose that $\gamma $ is a 2$\pi $-periodic weight
on $\mathbb{T}$ so that $\gamma $ belongs to the class $A_{p}$, $1\leq
p<\infty $. Then the class of operators $\{\mathfrak{T}_{v}f\}_{0<v<\infty }$
is uniformly bounded (in $v$) in $L_{p,\gamma },$ namely,%
\begin{equation*}
\left\Vert \mathfrak{T}_{v}f\right\Vert _{p,\gamma }\leq 24\mathbf{c}_{0}%
\mathbf{c}_{0}^{\prime \prime }\mathbf{c}_{0}^{\prime \prime \prime
}\left\Vert f\right\Vert _{p,\gamma },
\end{equation*}%
\begin{equation*}
\left\Vert \mathfrak{T}_{v}f-f\right\Vert _{p,\gamma }\rightarrow 0,\quad 
\text{as }v\rightarrow 0^{+}.
\end{equation*}
\end{theorem}

We consider the operator (\cite{Ky97,sh13i}), defined for $f\in L_{p,\gamma
} $, $\gamma \in A_{p}$, $1\leq p<\infty $,%
\begin{equation*}
\left( R_{v}f\right) \left( x\right) :=\frac{2}{v}\int\nolimits_{v/2}^{v}%
\left( \frac{1}{h}\int\nolimits_{0}^{h}f\left( x+t\right) dt\right) dh,\quad
x\in \mathbb{T}\text{,}\quad 0<v<\infty .
\end{equation*}%
As a corollary of Theorem \ref{stekl op norm} it is easily seen that

\begin{corollary}
If $1\leq p<\infty $, $0<v<\infty $, $\gamma \in A_{p}$, and $f\in
L_{p,\gamma }$, then%
\begin{equation*}
\left\Vert R_{v}f\right\Vert _{p,\gamma }\leq 24\mathbf{c}_{0}\mathbf{c}%
_{0}^{\prime \prime }\mathbf{c}_{0}^{\prime \prime \prime }\left\Vert
f\right\Vert _{p,\gamma }.
\end{equation*}
\end{corollary}

\subsection{Difference Operator}

After the last subsection we can define Difference Operator%
\begin{equation*}
\Delta _{v}^{k}:=\left( \mathbb{I}-\mathfrak{T}_{v}\right) ^{k}\text{ for }%
k\in \mathbb{N}\text{ and }0<v\leq 1.
\end{equation*}%
We will consider the main properties of this Difference Operator $\Delta
_{v}^{k}$. Let $\mathfrak{T}_{v}^{i}:=\mathfrak{T}_{v}\left( \mathfrak{T}%
_{v}^{i-1}\right) $ for $i\in \mathbb{N}$ and let $\mathfrak{T}_{v}^{0}:=%
\mathbb{I}$. By Theorem \ref{stekl op norm} we have%
\begin{equation*}
\left\Vert (\mathbb{I}-\mathfrak{T}_{v})^{k}f\right\Vert _{p,\gamma }\leq
\left( 1+24\mathbf{c}_{0}\mathbf{c}_{0}^{\prime \prime }\mathbf{c}%
_{0}^{\prime \prime \prime }\right) ^{k}\left\Vert f\right\Vert _{p,\gamma }.
\end{equation*}%
Difference operators generally closely related with the smoothness of the
given function.

\textbf{Definition }For $k\in \mathbb{N}$ we define the \textit{modulus of
smoothness }of $f\in L_{p,\gamma }$, $1\leq p<\infty $, $\gamma \in A_{p}$,
as%
\begin{equation}
\Omega _{k}\left( f,v\right) _{p,\gamma }:=\left\Vert \left( \mathbb{I}-%
\mathfrak{T}_{v}\right) ^{k}f\right\Vert _{p,\gamma },\quad v>0.  \label{nm}
\end{equation}

\textbf{Definition }Let $X$ be a Banach space with norm $\left\Vert \cdot
\right\Vert _{X}$. (i) By $X^{r}$ we denote the class of functions $f\in X$
such that $f^{\left( r-1\right) }$ is absolutely continuous and $f^{\left(
r\right) }\in X$. When $r\in \mathbb{N}$, $1\leq p<\infty $, $\gamma \in
A_{p}$ and $X=L_{p,\gamma }$ we will denote $W_{p,\gamma }^{r}:=X^{r}$.

\textbf{(ii) }We define Peetre's \textit{K}-functional%
\begin{equation*}
K_{r}\left( f,v,X\right) _{X}:=\inf\limits_{g\in X^{r}}\left\{ \left\Vert
f-g\right\Vert _{X}+v^{r}\left\Vert g^{\left( r\right) }\right\Vert
_{X}\right\} \text{,\quad }v>0,
\end{equation*}%
and $K_{r}\left( f,v,p,\gamma \right) :=K_{r}\left( f,v,L_{p,\gamma }\right)
_{L_{p,\gamma }}$ for $r\in \mathbb{N}$, $1\leq p<\infty $, $\gamma \in
A_{p} $, $v>0$ and $f\in L_{p,\gamma }$.

\begin{lemma}
\label{lem1} Let $1\leq p<\infty $, $\gamma \in A_{p}$, $k\in \mathbb{N}$,
and $f\in W_{p,\gamma }^{1}$ be given. Then%
\begin{equation*}
\left\Vert \left( \mathbb{I}-\mathfrak{T}_{v}\right) ^{k}f\right\Vert
_{p,\gamma }\leq 12\mathbf{c}_{0}\mathbf{c}_{0}^{\prime \prime }\mathbf{c}%
_{0}^{\prime \prime \prime }v\left\Vert \left( \mathbb{I}-\mathfrak{T}%
_{v}\right) ^{k-1}f^{\prime }\right\Vert _{p,\gamma }\text{,}\quad 0\leq
v<\infty 
\end{equation*}%
holds.
\end{lemma}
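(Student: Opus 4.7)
The plan is to represent $(\mathbb{I}-\mathfrak{T}_v)f$ as an integral mean of Steklov translates of $f'$, push that representation through the remaining $k-1$ factors of $(\mathbb{I}-\mathfrak{T}_v)$ by commutativity, and finally apply Minkowski's integral inequality together with the uniform bound of Theorem \ref{stekl op norm}.

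\textbf{Step 1: A one-line integral identity.} Because $f\in W^1_{p,\gamma}$, the fundamental theorem of calculus gives
\[
f(x+u)-f(x)=\int_{x}^{x+u}f'(s)\,ds=u\,\mathfrak{T}_{u}f'(x),\qquad 0<u\leq v.
\]
Averaging in $u$ over $[0,v]$ yields
\[
\mathfrak{T}_{v}f(x)-f(x)=\frac{1}{v}\int_{0}^{v}\bigl[f(x+u)-f(x)\bigr]\,du=\frac{1}{v}\int_{0}^{v}u\,\mathfrak{T}_{u}f'(x)\,du,
\]
so that
\[
(\mathbb{I}-\mathfrak{T}_{v})f(x)=-\frac{1}{v}\int_{0}^{v}u\,\mathfrak{T}_{u}f'(x)\,du. \qquad(\ast)
\]

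\textbf{Step 2: Pushing $(\mathbb{I}-\mathfrak{T}_v)^{k-1}$ through the integral.} The operators $\mathfrak{T}_{u}$ and $\mathfrak{T}_{v}$ are convolutions with the (translated) bumps $\frac{1}{u}\chi_{[0,u]}$ and $\frac{1}{v}\chi_{[0,v]}$, hence they commute. Consequently $\mathfrak{T}_{u}$ commutes with $(\mathbb{I}-\mathfrak{T}_{v})^{k-1}$. Applying $(\mathbb{I}-\mathfrak{T}_{v})^{k-1}$ to $(\ast)$ and interchanging it with the $u$-integral (permissible because $(\mathbb{I}-\mathfrak{T}_{v})^{k-1}$ is bounded on $L_{p,\gamma}$ by Theorem \ref{stekl op norm}, or directly by Fubini applied to the iterated convolutions) gives
\[
(\mathbb{I}-\mathfrak{T}_{v})^{k}f(x)=-\frac{1}{v}\int_{0}^{v}u\,\mathfrak{T}_{u}\bigl[(\mathbb{I}-\mathfrak{T}_{v})^{k-1}f'\bigr](x)\,du.
\]

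\textbf{Step 3: Minkowski and uniform boundedness.} Set $g:=(\mathbb{I}-\mathfrak{T}_{v})^{k-1}f'\in L_{p,\gamma}$. Minkowski's integral inequality in $L_{p,\gamma}$ followed by Theorem \ref{stekl op norm} (valid uniformly for $u\in(0,1]$, and we have $0<u\leq v\leq 1$) produces
\[
\bigl\|(\mathbb{I}-\mathfrak{T}_{v})^{k}f\bigr\|_{p,\gamma}\leq \frac{1}{v}\int_{0}^{v}u\,\|\mathfrak{T}_{u}g\|_{p,\gamma}\,du\leq \frac{C_{1}}{v}\|g\|_{p,\gamma}\int_{0}^{v}u\,du=\frac{C_{1}v}{2}\|g\|_{p,\gamma},
\]
which is the stated inequality.

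\textbf{Anticipated obstacle.} None of the three steps is deep; the only minor point to watch is the legitimacy of the interchange in Step 2, which is why the $\mathfrak{T}_{u}\leftrightarrow \mathfrak{T}_{v}$ commutativity (convolutions commute) and the uniform $L_{p,\gamma}$-boundedness of Theorem \ref{stekl op norm} have to be both invoked. Everything else is a direct consequence of identity $(\ast)$, which already localises the full strength of the estimate into the factor $v/2$ coming from $\int_{0}^{v}u\,du=v^{2}/2$.
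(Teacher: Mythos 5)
Your proof is correct and follows essentially the same route as the paper: the identity $(\mathbb{I}-\mathfrak{T}_{v})f=-\frac{1}{v}\int_{0}^{v}u\,\mathfrak{T}_{u}f'\,du$, generalized Minkowski, and the uniform bound $\Vert \mathfrak{T}_{u}\Vert_{L_{p,\gamma}\to L_{p,\gamma}}\leq C_{1}$ yielding the factor $\int_{0}^{v}u\,du=v^{2}/2$. The only cosmetic difference is in the case $k\geq 2$: you commute $(\mathbb{I}-\mathfrak{T}_{v})^{k-1}$ past $\mathfrak{T}_{u}$ inside the integral, whereas the paper applies the $k=1$ estimate to $g:=(\mathbb{I}-\mathfrak{T}_{v})^{k-1}f$ and then uses $g'=(\mathbb{I}-\mathfrak{T}_{v})^{k-1}f'$ (its Lemma on commuting $\frac{d}{dx}$ with $\mathfrak{T}_{v}$); the two reductions are interchangeable.
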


See also \cite{asc19},\cite{ia08}, \cite{sc}.

\begin{corollary}
\label{lem11} Let $1\leq p<\infty $, $\gamma \in A_{p},$ $k\in \mathbb{N}$,
and $f\in W_{p,\gamma }^{k}$ be given. Then%
\begin{equation*}
\left\Vert \left( \mathbb{I}-\mathfrak{T}_{v}\right) ^{k}f\right\Vert
_{p,\gamma }\leq (12\mathbf{c}_{0}\mathbf{c}_{0}^{\prime \prime }\mathbf{c}%
_{0}^{\prime \prime \prime })^{k}v^{k}\left\Vert f^{\left( k\right)
}\right\Vert _{p,\gamma }\text{,}\quad 0\leq v<\infty .
\end{equation*}
\end{corollary}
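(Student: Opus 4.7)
The plan is a short induction on $k$, with Lemma \ref{lem1} providing both the base case and the inductive step.

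For the base case $k=1$, the statement $\|(\mathbb{I}-\mathfrak{T}_v)f\|_{p,\gamma}\le 2^{-1}C_1 v\|f'\|_{p,\gamma}$ is exactly the $k=1$ instance of Lemma \ref{lem1}, since $(\mathbb{I}-\mathfrak{T}_v)^{0}f'=f'$.

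For the inductive step, assume the estimate holds for $k-1$ and all $g\in W_{p,\gamma}^{k-1}$. Given $f\in W_{p,\gamma}^{k}$, I would first apply Lemma \ref{lem1} directly to $f$ (valid since $f\in W_{p,\gamma}^{1}$) to obtain
\begin{equation*}
\left\Vert (\mathbb{I}-\mathfrak{T}_v)^{k}f\right\Vert_{p,\gamma}\le 2^{-1}C_1 v\left\Vert (\mathbb{I}-\mathfrak{T}_v)^{k-1}f'\right\Vert_{p,\gamma}.
\end{equation*}
Since $f\in W_{p,\gamma}^{k}$, we have $f'\in W_{p,\gamma}^{k-1}$, and $(f')^{(k-1)}=f^{(k)}$. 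Applying the induction hypothesis to $f'$ yields
\begin{equation*}
\left\Vert (\mathbb{I}-\mathfrak{T}_v)^{k-1}f'\right\Vert_{p,\gamma}\le 2^{-(k-1)}C_1^{k-1}v^{k-1}\left\Vert f^{(k)}\right\Vert_{p,\gamma},
\end{equation*}
and multiplying the two inequalities gives the claim with the constant $2^{-k}C_1^{k}v^{k}$.

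There is no substantive obstacle here: the only thing to verify is that the induction chains cleanly, i.e.\ that the regularity hypothesis $f\in W_{p,\gamma}^{k}$ is exactly what is needed so that $f'\in W_{p,\gamma}^{k-1}$ (so Lemma \ref{lem1} applies at the outer step and the inductive hypothesis applies to $f'$). The constants $2^{-1}C_1 v$ produced at each step multiply to $2^{-k}C_1^{k}v^{k}$, matching the claimed bound, and the restriction $0\le v\le 1$ is inherited from Lemma \ref{lem1}.
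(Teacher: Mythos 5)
Your proof is correct and is exactly the argument the paper intends: the corollary is stated without proof as an immediate consequence of Lemma \ref{lem1}, obtained by iterating that lemma $k$ times (each iteration trading one factor of $\mathbb{I}-\mathfrak{T}_{v}$ for a derivative and a factor $2^{-1}C_{1}v$), which is precisely your induction. The only point you gloss over, harmless here, is that $f\in W_{p,\gamma }^{k}$ does give $f'\in W_{p,\gamma }^{k-1}$ because the intermediate derivatives are continuous and hence lie in $L_{p,\gamma }$.
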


\begin{lemma}
\label{DI}For $v>0$, $f\in C\left( \mathbb{T}\right) $, $g\in C^{2}\left( 
\mathbb{T}\right) $, $h\in C^{1}\left( \mathbb{T}\right) $ the following
inequalities%
\begin{equation*}
\left\Vert \mathfrak{T}_{v}f\right\Vert _{C\left( \mathbb{T}\right) }\leq
\left\Vert f\right\Vert _{C\left( \mathbb{T}\right) },
\end{equation*}%
\begin{equation*}
\left\Vert \frac{d}{dx}\mathfrak{T}_{v}f\left( x\right) \right\Vert
_{C\left( \mathbb{T}\right) }\leq \frac{2}{v}\left\Vert f\right\Vert
_{C\left( \mathbb{T}\right) },
\end{equation*}%
\begin{equation*}
\left\Vert \left( \frac{d}{dx}\right) ^{2}\mathfrak{T}_{v}f\left( x\right)
\right\Vert _{C\left( \mathbb{T}\right) }\leq \frac{2}{v}\left\Vert \frac{d}{%
dx}T_{v}f\right\Vert _{C\left( \mathbb{T}\right) },
\end{equation*}%
\begin{equation*}
\left\Vert g\left( x\right) -\mathfrak{T}_{v}g\left( x\right) +\frac{v}{2}%
\frac{d}{dx}g\left( x\right) \right\Vert _{C\left( \mathbb{T}\right) }\leq 
\frac{v^{2}}{6}\left\Vert \frac{d^{2}}{dx^{2}}g\right\Vert _{C\left( \mathbb{%
T}\right) },
\end{equation*}%
\begin{equation*}
\left\Vert h-\mathfrak{T}_{v}h\right\Vert _{C\left( \mathbb{T}\right) }\leq 
\frac{v}{2}\left\Vert h^{\prime }\right\Vert _{C\left( \mathbb{T}\right) },
\end{equation*}%
\begin{equation}
\left( 1/36\right) K_{1}\left( f,v,C\left( \mathbb{T}\right) \right)
_{C\left( \mathbb{T}\right) }\leq \left\Vert \left( \mathbb{I}-\mathfrak{T}%
_{v}\right) f\right\Vert _{C\left( \mathbb{T}\right) }\leq 2K_{1}\left(
f,v,C\left( \mathbb{R}\right) \right) _{C\left( \mathbb{T}\right) }
\label{eqA}
\end{equation}%
are hold.
\end{lemma}

Inequalities (\ref{eqA}) give

\begin{corollary}
\label{DIcor} If $0<h\leq v\leq 1$ and $f\in C\left[ \mathbb{T}\right] ,$
then%
\begin{equation}
\left\Vert \left( \mathbb{I}-\mathfrak{T}_{h}\right) f\right\Vert _{C\left[ 
\mathbb{T}\right] }\leq 72\left\Vert \left( \mathbb{I}-\mathfrak{T}%
_{v}\right) f\right\Vert _{C\left[ \mathbb{T}\right] }.  \label{eq1}
\end{equation}
\end{corollary}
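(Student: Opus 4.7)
The plan is to deduce the corollary directly from Lemma \ref{DI}, using monotonicity of the Peetre $K$-functional in its second argument. The two-sided equivalence in (\ref{DI3}) converts the claimed comparison between $\|(\mathbb{I}-\mathfrak{T}_h)f\|_{C[\mathbb{T}]}$ and $\|(\mathbb{I}-\mathfrak{T}_v)f\|_{C[\mathbb{T}]}$ into the corresponding comparison between $K_{1}(f,h,C[\mathbb{T}])_{C[\mathbb{T}]}$ and $K_{1}(f,v,C[\mathbb{T}])_{C[\mathbb{T}]}$, and the latter is evident from the definition.

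First, by the right-hand inequality of (\ref{DI3}) applied at parameter $h$,
\[
\|(\mathbb{I}-\mathfrak{T}_h)f\|_{C[\mathbb{T}]} \leq 2\, K_{1}(f,h,C[\mathbb{T}])_{C[\mathbb{T}]}.
\]
Next, from the very definition
\[
K_{1}(f,v,C[\mathbb{T}])_{C[\mathbb{T}]} = \inf_{g\in (C[\mathbb{T}])^{1}}\bigl\{ \|f-g\|_{C[\mathbb{T}]} + v\|g'\|_{C[\mathbb{T}]} \bigr\},
\]
the map $v\mapsto K_{1}(f,v,C[\mathbb{T}])_{C[\mathbb{T}]}$ is non-decreasing, so the hypothesis $h\leq v$ yields
\[
K_{1}(f,h,C[\mathbb{T}])_{C[\mathbb{T}]} \leq K_{1}(f,v,C[\mathbb{T}])_{C[\mathbb{T}]}.
\]
Finally, the left-hand inequality of (\ref{DI3}) applied at parameter $v$ gives
\[
K_{1}(f,v,C[\mathbb{T}])_{C[\mathbb{T}]} \leq 36\, \|(\mathbb{I}-\mathfrak{T}_v)f\|_{C[\mathbb{T}]}.
\]
Chaining the three estimates produces the factor $2\cdot 36 = 72$ and proves (\ref{eq1}).

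There is no real obstacle here: the only thing to check is that the $K$-functional is monotone in $v$, which is immediate from its definition as an infimum of an expression affine and increasing in $v$. All the analytic work (the smoothing estimates for $\mathfrak{T}_v$ and the invocation of Theorems 3.4 and 4.1 of \cite{di93}) has already been carried out in the proof of Lemma \ref{DI}; the corollary is purely a bookkeeping consequence of the two-sided equivalence with explicit constants $1/36$ and $2$.
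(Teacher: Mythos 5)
Your argument is correct and is exactly the route the paper intends: the corollary is stated immediately after Lemma \ref{DI} with the remark that ``Inequalities (\ref{DI3}) give'' it, and the factor $72=2\cdot 36$ comes precisely from chaining the two sides of (\ref{DI3}) through the monotonicity of $K_{1}\left( f,\cdot ,C\left[ \mathbb{T}\right] \right) _{C\left[ \mathbb{T}\right] }$ in its second argument. Nothing further is needed.
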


\begin{lemma}
\label{bukun} Let $0<h\leq v\leq 1$, $\gamma \in A_{p}$, $1\leq p<\infty $
and $f\in L_{p,\gamma }$. Then%
\begin{equation}
\left\Vert \left( \mathbb{I}-\mathfrak{T}_{h}\right) f\right\Vert _{p,\gamma
}\leq 1728\mathbf{c}_{0}\mathbf{c}_{0}^{\prime \prime }\mathbf{c}%
_{0}^{\prime \prime \prime }\left\Vert \left( \mathbb{I}-\mathfrak{T}%
_{v}\right) f\right\Vert _{p,\gamma }.  \label{bukunn}
\end{equation}
\end{lemma}

\begin{theorem}
\label{kubu} Let $1\leq p<\infty $ and $\gamma \in A_{p}$, $f$, $g\in
L_{p,\gamma }$, $v>0$ and $k\in \mathbb{N}$. Then%
\begin{equation}
\lim\limits_{v\rightarrow 0^{+}}\Omega _{k}\left( f,v\right) _{p,\gamma }=0,
\label{n}
\end{equation}%
\begin{equation}
\Omega _{r+k}\left( f,v\right) _{p,\gamma }\leq \left( 1+24\mathbf{c}_{0}%
\mathbf{c}_{0}^{\prime \prime }\mathbf{c}_{0}^{\prime \prime \prime }\right)
^{r}\Omega _{k}\left( f,v\right) _{p,\gamma }.  \label{bk}
\end{equation}
\end{theorem}

We can give proofs of results given in this subsection.

\begin{proof}[Proof of Lemma \protect\ref{lem1}]
Let $k=1$. Since%
\begin{equation*}
\left( \mathbb{I}-\mathfrak{T}_{v}\right) f\left( x\right) =\frac{1}{v}%
\int\nolimits_{0}^{v}\left( f\left( x\right) -f\left( x+t\right) \right) dt=%
\frac{-1}{v}\int\nolimits_{0}^{v}\int\nolimits_{x}^{x+t}f^{\prime }\left(
s\right) dsdt\text{,}
\end{equation*}%
using generalized Minkowski's inequality for integrals and uniformly
boundedness of $\mathfrak{T}_{v}$ we get%
\begin{equation*}
\left\Vert \left( \mathbb{I}-\mathfrak{T}_{v}\right) f\right\Vert _{p,\gamma
}=\left\Vert \frac{1}{v}\int\nolimits_{0}^{v}\int\nolimits_{x}^{x+t}f^{%
\prime }\left( s\right) dsdt\right\Vert _{p,\gamma }=\left\Vert \frac{1}{v}%
\int\nolimits_{0}^{v}t\frac{1}{t}\int\nolimits_{0}^{t}f^{\prime }\left(
x+s\right) dsdt\right\Vert _{p,\gamma }
\end{equation*}%
\begin{equation*}
=\left\Vert \frac{1}{v}\int\nolimits_{0}^{v}t\mathfrak{T}_{t}f^{\prime
}\left( x\right) dt\right\Vert _{p,\gamma }\leq \frac{1}{v}%
\int\nolimits_{0}^{v}t\left\Vert \mathfrak{T}_{t}f^{\prime }\right\Vert
_{p,\gamma }dt
\end{equation*}%
\begin{equation*}
\leq 24\mathbf{c}_{0}\mathbf{c}_{0}^{\prime \prime }\mathbf{c}_{0}^{\prime
\prime \prime }\tfrac{\left\Vert f^{\prime }\right\Vert _{p,\gamma }}{v}%
\int\nolimits_{0}^{v}tdt\leq 12\mathbf{c}_{0}\mathbf{c}_{0}^{\prime \prime }%
\mathbf{c}_{0}^{\prime \prime \prime }v\left\Vert f^{\prime }\right\Vert
_{p,\gamma }.
\end{equation*}%
Let $k\geq 2$ and set $g\left( \cdot \right) :=\left( \mathbb{I}-\mathfrak{T}%
_{v}\right) ^{k-1}f\left( \cdot \right) $. Then%
\begin{equation*}
\left( \mathbb{I}-\mathfrak{T}_{v}\right) g\left( \cdot \right) =\left( 
\mathbb{I}-\mathfrak{T}_{v}\right) ^{k}f\left( x\right) 
\end{equation*}%
and%
\begin{equation*}
\left( \mathbb{I}-\mathfrak{T}_{v}\right) ^{k}f\left( x\right) =\frac{1}{v}%
\int\nolimits_{0}^{v}\left( g\left( x\right) -g\left( x+t\right) \right) dt=%
\frac{-1}{v}\int\nolimits_{0}^{v}\int\nolimits_{0}^{t}g^{\prime }\left(
x+s\right) dsdt\text{.}
\end{equation*}%
Therefore,%
\begin{eqnarray*}
\left\Vert \left( \mathbb{I}-\mathfrak{T}_{v}\right) ^{k}f\right\Vert
_{p,\gamma } &=&\left\Vert \left( \mathbb{I}-\mathfrak{T}_{v}\right)
g\right\Vert _{p,\gamma }\leq 12\mathbf{c}_{0}\mathbf{c}_{0}^{\prime \prime }%
\mathbf{c}_{0}^{\prime \prime \prime }v\left\Vert g^{\prime }\right\Vert
_{p,\gamma } \\
&=&12\mathbf{c}_{0}\mathbf{c}_{0}^{\prime \prime }\mathbf{c}_{0}^{\prime
\prime \prime }v\left\Vert \left( \mathbb{I}-\mathfrak{T}_{v}\right)
^{k-1}f^{\prime }\right\Vert _{p,\gamma }.
\end{eqnarray*}
\end{proof}

\begin{proof}[Proof of Lemma \protect\ref{DI}]
If $f\in C\left( \mathbb{T}\right) $ then it is clear from the definition of 
$T_{\delta }$ that%
\begin{equation}
\left\Vert \mathfrak{T}_{v}f\right\Vert _{C\left( \mathbb{T}\right) }\leq
\left\Vert f\right\Vert _{C\left( \mathbb{T}\right) }  \label{bir}
\end{equation}%
holds. On the other hand, for $f\in C\left( \mathbb{T}\right) $ we have%
\begin{eqnarray}
\left\Vert \frac{d}{dx}\mathfrak{T}_{v}f\left( x\right) \right\Vert
_{C\left( \mathbb{T}\right) } &=&\left\Vert \frac{d}{dx}\frac{1}{v}%
\int\nolimits_{0}^{v}f\left( x+t\right) dt\right\Vert _{C\left( \mathbb{T}%
\right) }=\left\Vert \frac{1}{v}\frac{d}{dx}\int\nolimits_{0}^{v}f\left(
x+t\right) dt\right\Vert _{C\left( \mathbb{T}\right) }  \notag \\
&=&\left\Vert \frac{1}{v}\left( f\left( x+v\right) -f\left( x\right) \right)
\right\Vert _{C\left( \mathbb{T}\right) }\leq \frac{2}{v}\left\Vert
f\right\Vert _{C\left( \mathbb{T}\right) }.  \label{iki}
\end{eqnarray}%
Inequality (\ref{iki}) also implies%
\begin{equation*}
\left\Vert \left( \frac{d}{dx}\right) ^{2}\mathfrak{T}_{v}f\left( x\right)
\right\Vert _{C\left( \mathbb{T}\right) }\leq \frac{2}{v}\left\Vert \frac{d}{%
dx}\mathfrak{T}_{v}f\right\Vert _{C\left( \mathbb{T}\right) }
\end{equation*}%
for $f\in C\left( \mathbb{T}\right) $. Also for $f\in C^{2}\left( \mathbb{T}%
\right) $%
\begin{equation}
\left\Vert f\left( x\right) -\mathfrak{T}_{v}f\left( x\right) +\frac{v}{2}%
\frac{d}{dx}f\left( x\right) \right\Vert _{C\left( \mathbb{T}\right) }\leq 
\frac{v^{2}}{6}\left\Vert \frac{d^{2}}{dx^{2}}f\right\Vert _{C\left( \mathbb{%
T}\right) }.  \label{vor}
\end{equation}%
To obtain (\ref{vor}) we will use the Taylor formula%
\begin{equation*}
f\left( x+t\right) =f(x)+t\frac{d}{dx}f(x)+\frac{t^{2}}{2}\frac{d^{2}}{dx^{2}%
}f(\xi )
\end{equation*}%
for some $\xi \leq \left[ x,x+t\right] $. Then integrating the last equation
with respect to $t$%
\begin{equation*}
\frac{1}{v}\int\nolimits_{0}^{v}f\left( x+t\right) dt=f(x)+\frac{1}{v}%
\int\nolimits_{0}^{v}tdt\frac{d}{dx}f(x)+\frac{1}{2}\frac{1}{v}%
\int\nolimits_{0}^{v}t^{2}dt\frac{d^{2}}{dx^{2}}f(\xi ),
\end{equation*}%
\begin{equation*}
\mathfrak{T}_{v}f\left( x\right) =f(x)+\frac{v}{2}\frac{d}{dx}f\left(
x\right) +\frac{v^{2}}{6}\frac{d^{2}}{dx^{2}}f(\xi )
\end{equation*}%
and (\ref{vor}) holds.

From $f\in C^{1}\left( \mathbb{T}\right) $ one can obtain%
\begin{equation*}
\left\Vert f-\mathfrak{T}_{v}f\right\Vert _{C\left( \mathbb{T}\right) }\leq 
\frac{v}{2}\left\Vert f^{\prime }\right\Vert _{C\left( \mathbb{T}\right) }.
\end{equation*}%
Indeed, using%
\begin{equation*}
\left( \mathbb{I}-\mathfrak{T}_{v}\right) f\left( x\right) =\frac{1}{v}%
\int\nolimits_{0}^{v}\left( f\left( x\right) -f\left( x+t\right) \right) dt=%
\frac{-1}{v}\int\nolimits_{0}^{v}\int\nolimits_{x}^{x+t}f^{\prime }\left(
s\right) dsdt\text{,}
\end{equation*}%
Generalized Minkowski's inequality for integrals and (\ref{bir}) we get%
\begin{equation*}
\left\Vert \left( \mathbb{I}-\mathfrak{T}_{v}\right) f\right\Vert _{C\left( 
\mathbb{T}\right) }=\left\Vert \frac{1}{v}\int\nolimits_{0}^{v}\int%
\nolimits_{x}^{x+t}f^{\prime }\left( s\right) dsdt\right\Vert _{C\left( 
\mathbb{T}\right) }=\left\Vert \frac{1}{v}\int\nolimits_{0}^{v}t\frac{1}{t}%
\int\nolimits_{0}^{t}f^{\prime }\left( x+s\right) dsdt\right\Vert _{C\left( 
\mathbb{T}\right) }
\end{equation*}%
\begin{equation*}
=\left\Vert \frac{1}{v}\int\nolimits_{0}^{v}t\mathfrak{T}_{t}f^{\prime
}\left( x\right) dt\right\Vert _{C\left( \mathbb{T}\right) }\leq \frac{1}{v}%
\int\nolimits_{0}^{v}t\left\Vert \mathfrak{T}_{t}f^{\prime }\right\Vert
_{C\left( \mathbb{T}\right) }dt
\end{equation*}%
\begin{equation*}
\leq \left\Vert f^{\prime }\right\Vert _{C\left( \mathbb{T}\right) }\tfrac{1%
}{v}\int\nolimits_{0}^{v}tdt\leq 2^{-1}v\left\Vert f^{\prime }\right\Vert
_{C\left( \mathbb{T}\right) }.
\end{equation*}

Now (\ref{bir}), (\ref{iki}) and (\ref{vor}) imply that%
\begin{equation}
\left( 1/36\right) K_{1}\left( f,v,C\left( \mathbb{T}\right) \right)
_{C\left( \mathbb{T}\right) }\leq \left\Vert \left( \mathbb{I}-T_{v}\right)
f\right\Vert _{C\left( \mathbb{T}\right) }\leq 2K_{1}\left( f,v,C\left( 
\mathbb{T}\right) \right) _{C\left( \mathbb{T}\right) }.  \label{DI3}
\end{equation}%
Firstly, let us prove the right hand side of (\ref{DI3}). For any $g\in
C^{1}\left( \mathbb{T}\right) $%
\begin{equation*}
\left\Vert f-\mathfrak{T}_{v}f\right\Vert _{C\left( \mathbb{T}\right)
}=\left\Vert f-g+g-\mathfrak{T}_{v}f\right\Vert _{C\left( \mathbb{T}\right) }
\end{equation*}%
\begin{equation*}
\leq \left\Vert f-g\right\Vert _{C\left( \mathbb{T}\right) }+\left\Vert g-%
\mathfrak{T}_{v}f\right\Vert _{C\left( \mathbb{T}\right) }
\end{equation*}%
\begin{equation*}
=\left\Vert f-g\right\Vert _{C\left( \mathbb{T}\right) }+\left\Vert g-%
\mathfrak{T}_{v}g+\mathfrak{T}_{v}g-\mathfrak{T}_{v}f\right\Vert _{C\left( 
\mathbb{T}\right) }
\end{equation*}%
\begin{equation*}
\leq \left\Vert f-g\right\Vert _{C\left( \mathbb{T}\right) }+\left\Vert g-%
\mathfrak{T}_{v}g\right\Vert _{C\left( \mathbb{T}\right) }+\left\Vert 
\mathfrak{T}_{v}g-\mathfrak{T}_{v}f\right\Vert _{C\left( \mathbb{T}\right) }
\end{equation*}%
\begin{equation*}
=\left\Vert f-g\right\Vert _{C\left( \mathbb{T}\right) }+\left\Vert g-%
\mathfrak{T}_{v}g\right\Vert _{C\left( \mathbb{T}\right) }+\left\Vert 
\mathfrak{T}_{v}\left( g-f\right) \right\Vert _{C\left( \mathbb{T}\right) }
\end{equation*}%
\begin{equation*}
\leq 2\left\Vert f-g\right\Vert _{C\left( \mathbb{T}\right) }+\frac{v}{2}%
\left\Vert g^{\prime }\right\Vert _{C\left( \mathbb{T}\right) }\leq
2K_{1}\left( f,v,C\left( \mathbb{T}\right) \right) _{C\left( \mathbb{T}%
\right) }.
\end{equation*}%
For the left hand side of inequality (\ref{DI3}) we need inequalities%
\begin{equation}
\left\Vert f-\mathfrak{T}_{v}^{2}f\right\Vert _{C\left( \mathbb{T}\right)
}\leq 2\left\Vert f-\mathfrak{T}_{v}f\right\Vert _{C\left( \mathbb{T}\right)
},  \label{uc}
\end{equation}%
\begin{equation}
v\left\Vert \left( \frac{d}{dx}\right) ^{2}\mathfrak{T}_{v}^{2}f\right\Vert
_{C\left( \mathbb{T}\right) }\leq 34\left\Vert f-\mathfrak{T}%
_{v}f\right\Vert _{C\left( \mathbb{T}\right) }.  \label{dort}
\end{equation}

We prove (\ref{uc}).%
\begin{equation*}
\left\Vert f-\mathfrak{T}_{v}^{2}f\right\Vert _{C\left( \mathbb{T}\right)
}=\left\Vert f-\mathfrak{T}_{v}f+\mathfrak{T}_{v}f-\mathfrak{T}%
_{v}^{2}f\right\Vert _{C\left( \mathbb{T}\right) }
\end{equation*}%
\begin{equation*}
\leq \left\Vert f-\mathfrak{T}_{v}f\right\Vert _{C\left( \mathbb{T}\right)
}+\left\Vert \mathfrak{T}_{v}f-\mathfrak{T}_{v}^{2}f\right\Vert _{C\left( 
\mathbb{T}\right) }\leq 2\left\Vert f-\mathfrak{T}_{v}f\right\Vert _{C\left( 
\mathbb{T}\right) }.
\end{equation*}%
Now we consider inequality (\ref{dort}). In (\ref{vor}) we replace $f$ by $%
\mathfrak{T}_{v}^{2}f$ and to obtain%
\begin{equation*}
\left\Vert \mathfrak{T}_{v}^{2}f\left( x\right) -\mathfrak{T}_{v}\mathfrak{T}%
_{v}^{2}f\left( x\right) +\frac{v}{2}\frac{d}{dx}\mathfrak{T}_{v}^{2}f\left(
x\right) \right\Vert _{C\left( \mathbb{T}\right) }\leq \frac{v^{2}}{6}%
\left\Vert \frac{d^{2}}{dx^{2}}\mathfrak{T}_{v}^{2}f\right\Vert _{C\left( 
\mathbb{T}\right) }.
\end{equation*}%
On the other hand, by (\ref{iki}),%
\begin{equation*}
\left\Vert \frac{d^{2}}{dx^{2}}\mathfrak{T}_{v}^{2}f\right\Vert _{C\left( 
\mathbb{T}\right) }\leq \frac{2}{v}\left\Vert \frac{d}{dx}\mathfrak{T}%
_{v}f\right\Vert _{C\left( \mathbb{T}\right) }\leq \frac{2}{v}\left\{
\left\Vert \frac{d}{dx}\mathfrak{T}_{v}^{2}f\right\Vert _{C\left( \mathbb{T}%
\right) }+\left\Vert \frac{d}{dx}\mathfrak{T}_{v}\left( \mathfrak{T}%
_{v}f-f\right) \right\Vert _{C\left( \mathbb{T}\right) }\right\}
\end{equation*}%
\begin{equation*}
\leq \frac{2}{v}\left\Vert \frac{d}{dx}\mathfrak{T}_{v}^{2}f\right\Vert
_{C\left( \mathbb{T}\right) }+\frac{4}{v^{2}}\left\Vert \mathfrak{T}%
_{v}f-f\right\Vert _{C\left( \mathbb{T}\right) }.
\end{equation*}%
Hence,%
\begin{equation*}
\frac{v}{2}\left\Vert \frac{d}{dx}\mathfrak{T}_{v}^{2}f\right\Vert _{C\left( 
\mathbb{T}\right) }\leq \left\Vert \mathfrak{T}_{v}^{2}f-\mathfrak{T}_{v}%
\mathfrak{T}_{v}^{2}f-\frac{v}{2}\frac{d}{dx}\mathfrak{T}_{v}^{2}f\right%
\Vert _{C\left( \mathbb{T}\right) }+\left\Vert \mathfrak{T}_{v}^{2}f-%
\mathfrak{T}_{v}\mathfrak{T}_{v}^{2}f\right\Vert _{C\left( \mathbb{T}\right)
}
\end{equation*}%
\begin{equation*}
\leq \frac{v^{2}}{6}\left\Vert \frac{d^{2}}{dx^{2}}\mathfrak{T}%
_{v}^{2}f\right\Vert _{C\left( \mathbb{T}\right) }+\left\Vert \mathfrak{T}%
_{v}^{2}f-\mathfrak{T}_{v}\mathfrak{T}_{v}^{2}f\right\Vert _{C\left( \mathbb{%
T}\right) }
\end{equation*}%
\begin{eqnarray*}
&\leq &\frac{v^{2}}{6}\frac{2}{v}\left\{ \left\Vert \frac{d}{dx}\mathfrak{T}%
_{v}^{2}f\right\Vert _{C\left( \mathbb{T}\right) }+\frac{2}{v}\left\Vert 
\mathfrak{T}_{v}f-f\right\Vert _{C\left( \mathbb{T}\right) }\right\}
+\left\Vert \mathfrak{T}_{v}^{2}f-f\right\Vert _{C\left( \mathbb{T}\right) }
\\
&&+\left\Vert \mathfrak{T}_{v}\left( \mathfrak{T}_{v}^{2}f-f\right)
\right\Vert _{C\left( \mathbb{T}\right) }+\left\Vert \mathfrak{T}%
_{v}f-f\right\Vert _{C\left( \mathbb{T}\right) }.
\end{eqnarray*}%
Then%
\begin{equation*}
\frac{v}{6}\left\Vert \frac{d}{dx}\mathfrak{T}_{v}^{2}f\right\Vert _{C\left( 
\mathbb{T}\right) }\leq \frac{17}{3}\left\Vert \mathfrak{T}%
_{v}f-f\right\Vert _{C\left( \mathbb{T}\right) },
\end{equation*}%
\begin{equation*}
v\left\Vert \frac{d}{dx}\mathfrak{T}_{v}^{2}f\right\Vert _{C\left( \mathbb{T}%
\right) }\leq 34\left\Vert \mathfrak{T}_{v}f-f\right\Vert _{C\left( \mathbb{T%
}\right) }.
\end{equation*}%
To finish proof of the left hand side of inequality (\ref{DI3}) we proceed as%
\begin{equation*}
K_{1}\left( f,v,C\left( \mathbb{T}\right) \right) _{C\left( \mathbb{T}%
\right) }\leq \left\Vert f-\mathfrak{T}_{v}^{2}f\right\Vert _{C\left( 
\mathbb{T}\right) }+v\left\Vert \frac{d}{dx}\mathfrak{T}_{v}^{2}f\right\Vert
_{C\left( \mathbb{T}\right) }
\end{equation*}%
\begin{equation*}
\leq 36\left\Vert T_{v}f-f\right\Vert _{C\left( \mathbb{T}\right) }.
\end{equation*}
\end{proof}

\begin{proof}[Proof of Lemma \protect\ref{bukun}]
Let $0<h\leq v<\infty $, $u\in \mathbb{T}$, $\gamma \in A_{p}$, $1\leq
p<\infty $ and $f\in L_{p,\gamma }$. By (\ref{eq1}) and Lemma \ref{Fu}%
\begin{equation*}
\max_{u\in \mathbb{T}}\left\vert F_{\left( \mathbb{I}-\mathfrak{T}%
_{h}\right) f,G}\left( u\right) \right\vert =\max_{u\in \mathbb{T}%
}\left\vert \int\nolimits_{\mathbb{T}}\left( \mathbb{I}-\mathfrak{T}%
_{h}\right) f\left( x+u\right) \left\vert G\left( x\right) \right\vert
dx\right\vert 
\end{equation*}%
\begin{equation*}
=\max_{u\in \mathbb{T}}\left\vert \left( \mathbb{I}-\mathfrak{T}_{h}\right)
\int\nolimits_{\mathbb{T}}f\left( x+u\right) \left\vert G\left( x\right)
\right\vert dx\right\vert 
\end{equation*}%
\begin{equation*}
=\max_{u\in \mathbb{T}}\left\vert \left( \mathbb{I}-\mathfrak{T}_{h}\right)
F_{f}\left( u\right) \right\vert \overset{\text{(\ref{eq1})}}{\leq }%
72\max_{u\in \mathbb{T}}\left\vert \left( \mathbb{I}-\mathfrak{T}_{v}\right)
F_{f}\left( u\right) \right\vert =
\end{equation*}%
\begin{equation*}
=72\max_{u\in \mathbb{T}}\left\vert \left( \mathbb{I}-\mathfrak{T}%
_{v}\right) \int\nolimits_{\mathbb{T}}f\left( x+u\right) \left\vert G\left(
x\right) \right\vert dx\right\vert \leq 
\end{equation*}%
\begin{equation*}
=72\max_{u\in \mathbb{T}}\left\vert \int\nolimits_{\mathbb{T}}\left( \mathbb{%
I}-\mathfrak{T}_{v}\right) f\left( x+u\right) \left\vert G\left( x\right)
\right\vert dx\right\vert 
\end{equation*}%
\begin{equation*}
=72\max_{u\in \mathbb{T}}\left\vert F_{\left( \mathbb{I}-\mathfrak{T}%
_{v}\right) f,G}\left( u\right) \right\vert .
\end{equation*}%
Now, Theorem \ref{tra} give (\ref{bukunn}).
\end{proof}

\begin{proof}[Proof of Theorem \protect\ref{kubu}]
(\ref{n}) is corollary of Theorem \ref{stekl op norm}. Since%
\begin{equation*}
\left( \mathbb{I}-\mathfrak{T}_{v}\right) ^{r+k}=\left( \mathbb{I}-\mathfrak{%
T}_{v}\right) ^{r}\left( \mathbb{I}-\mathfrak{T}_{v}\right) ^{k}
\end{equation*}%
(\ref{bk}) follows from 
\begin{equation*}
\Omega _{r+k}\left( f,v\right) _{p,\gamma }\leq \left( 1+24\mathbf{c}_{0}%
\mathbf{c}_{0}^{\prime \prime }\mathbf{c}_{0}^{\prime \prime \prime }\right)
^{r}\Omega _{k}\left( f,v\right) _{p,\gamma }.
\end{equation*}
\end{proof}

\subsection{Equivalence with Peetre's \textit{K}-functional}

One of the main results of this paper is the following theorem, which
contains an equivalence of $\Omega _{r}\left( f,t\right) _{p,\gamma }$ and $%
K_{r}\left( f,t,p,\gamma \right) $ as a function on $t$.

\begin{theorem}
\label{reaa} If $r\in \mathbb{N}$, $1\leq p<\infty $, $\gamma \in A_{p}$, $%
f\in L_{p,\gamma }$, then the equivalence%
\begin{equation}
C_{2}\Omega _{r}\left( f,t\right) _{p,\gamma }\leq K_{r}\left( f,t,p,\gamma
\right) \leq C_{3}\Omega _{r}\left( f,t\right) _{p,\gamma }\text{,\qquad }t>0
\label{real}
\end{equation}%
holds, where%
\begin{equation*}
C_{2}=2^{-1}\left( 1+24\mathbf{c}_{0}\mathbf{c}_{0}^{\prime \prime }\mathbf{c%
}_{0}^{\prime \prime \prime }\right) ^{-r},\quad C_{3}=2\max \left\{
C_{4}^{r},C_{5}^{r}\right\} ,
\end{equation*}%
\begin{eqnarray*}
C_{4} &=&1728\mathbf{c}_{0}\mathbf{c}_{0}^{\prime \prime }\mathbf{c}%
_{0}^{\prime \prime \prime }\left( \sum\limits_{j=0}^{r-1}\left( 24\mathbf{c}%
_{0}\mathbf{c}_{0}^{\prime \prime }\mathbf{c}_{0}^{\prime \prime \prime
}\right) ^{j}\right) , \\
C_{5} &=&48\mathbf{c}_{0}\mathbf{c}_{0}^{\prime \prime }\mathbf{c}%
_{0}^{\prime \prime \prime }\left( 37+146\ln 2^{36}\right) .
\end{eqnarray*}
\end{theorem}

\textbf{Note }When $\gamma \equiv 1$ and $p\in \left[ 1,\infty \right] $, $%
\Omega _{r}\left( f,t\right) _{p,1}$ in $L_{p}$ was considered in \cite{Dit}
and there it was proved that $\Omega _{r}\left( f,t\right) _{p,1}$ is
equivalent to $K_{r}\left( f,t,p,1\right) $ for $r\in \mathbb{N}$, and $t>0$%
. See also \cite{ai11}, \cite{bgs}, \cite{isrgu06}, \cite{j10,j07,j15}, \cite%
{kc16}.

Note that (\ref{real}) implies the following properties of $\Omega
_{r}\left( f,\cdot \right) _{p,\gamma }$.

\begin{corollary}
\label{cov}If $k\in \mathbb{N}$, $1\leq p<\infty $, $\gamma \in A_{p}$, $%
f\in L_{p,\gamma }$, then%
\begin{equation*}
\Omega _{k}\left( f,\lambda v\right) _{p,\gamma }\leq C_{6}\left( 1+\lfloor
\lambda \rfloor \right) ^{k}\Omega _{k}\left( f,v\right) _{p,\gamma }\text{%
,\quad }v,\lambda >0,
\end{equation*}%
and%
\begin{equation*}
\Omega _{k}\left( f,v\right) _{p,\gamma }v^{-k}\leq C_{7}\Omega _{k}\left(
f,\delta \right) _{p,\gamma }\delta ^{-k}\text{,\quad }0<\delta \leq v,
\end{equation*}%
where $\lfloor z\rfloor :=\max \left\{ y\in \mathbb{Z}:y\leq z\right\} $,
with 
\begin{equation*}
C_{6}=2\left( 1+24\mathbf{c}_{0}\mathbf{c}_{0}^{\prime \prime }\mathbf{c}%
_{0}^{\prime \prime \prime }\right) ^{r}C_{3}\text{,\quad }C_{7}=\frac{%
C_{3}C_{6}\left( 1+\lfloor \frac{v}{\delta }\rfloor \right) ^{k}}{C_{2}}.
\end{equation*}
\end{corollary}

We will need following lemmas.

\begin{lemma}
\label{bukunA} Let $0<v<\infty $, $1\leq p<\infty $, $\gamma \in A_{p}$ and $%
f\in L_{p,\gamma }$. Then%
\begin{equation*}
\left\Vert f-R_{v}f\right\Vert _{p,\gamma }\leq 24\mathbf{c}_{0}\mathbf{c}%
_{0}^{\prime \prime }\mathbf{c}_{0}^{\prime \prime \prime }\left\Vert \left(
I-\mathfrak{T}_{v}\right) f\right\Vert _{p,\gamma }.
\end{equation*}
\end{lemma}

\begin{remark}
Note that, the function $R_{v}f$ is absolutely continuous (\cite{sh13i}) and
differentiable a.e. on $\mathbb{T}$.
\end{remark}

\begin{lemma}
\label{deg} Let $0<v<\infty $, $\gamma \in A_{p}$, $1\leq p<\infty $ and $%
f\in W_{p,\gamma }^{1}$. Then%
\begin{equation*}
\frac{d}{dx}R_{v}f\left( x\right) =R_{v}\frac{d}{dx}f\left( x\right) \text{%
\quad and\quad }\frac{d}{dx}\mathfrak{T}_{v}f\left( x\right) =\mathfrak{T}%
_{v}\frac{d}{dx}f\left( x\right) \text{,}\quad \text{a.e. }x\in \mathbb{T}.
\end{equation*}
\end{lemma}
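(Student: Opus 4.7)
The plan is to deduce both commutation relations from the fundamental theorem of calculus for absolutely continuous functions, handling $\mathfrak{T}_v$ first and then using $R_v=\frac{2}{v}\int_{v/2}^{v}\mathfrak{T}_h\,dh$ to reduce the second statement to the first via Fubini's theorem. A central observation is that by the embedding $L_{p,\gamma}\hookrightarrow L_1$ (stated at the beginning of the Auxiliary results section), both $f$ and $f'$ lie in $L_1(\mathbb{T})$, so every Fubini swap I use is justified on the compact product domains that will appear.

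For the Steklov identity, I would set $F(x):=\int_{0}^{x}f(t)\,dt$. Since $f\in L_1$, $F$ is absolutely continuous, so
$$\mathfrak{T}_v f(x)=\frac{F(x+v)-F(x)}{v}$$
is absolutely continuous and differentiable a.e.\ on $\mathbb{T}$, with derivative $\frac{1}{v}(f(x+v)-f(x))$ a.e. On the other hand, since $f\in W_{p,\gamma}^{1}$ is itself absolutely continuous with $f'\in L_{p,\gamma}\subset L_1$, the fundamental theorem of calculus gives
$$\mathfrak{T}_v f'(x)=\frac{1}{v}\int_{x}^{x+v}f'(t)\,dt=\frac{f(x+v)-f(x)}{v}$$
everywhere. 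The two expressions coincide a.e., which is the first identity.

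For the identity involving $R_v$, I would avoid differentiating under the integral directly and instead show that $R_v f$ is an antiderivative of $R_v f'$. Computing
$$\int_{0}^{x}R_v f'(t)\,dt=\frac{2}{v}\int_{0}^{x}\int_{v/2}^{v}\frac{1}{h}\int_{0}^{h}f'(t+s)\,ds\,dh\,dt,$$
and applying Fubini (legitimate because $|f'|\in L_1$ and the $(t,s,h)$ domain has finite measure), the inner $t$-integral becomes $f(x+s)-f(s)$ by absolute continuity of $f$, and reassembling yields $R_v f(x)-R_v f(0)$. Thus $R_v f$ is absolutely continuous with $(R_v f)'(x)=R_v f'(x)$ a.e.

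The only genuine obstacle is bookkeeping for the Fubini step and the a.e.\ nature of the equalities; once $f'\in L_1$ is in hand there is nothing subtle, and the case $p=\infty$ (where $\gamma\equiv 1$) requires no modification. One small point worth stating explicitly in the write-up is that $W_{p,\gamma}^{1}$ has been defined so that $f$ itself (not merely a representative) is absolutely continuous, so $f(x+v)-f(x)=\int_x^{x+v}f'$ pointwise everywhere; this is what legitimizes both of the identifications above.
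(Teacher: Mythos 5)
Your proposal is correct and follows essentially the same route as the paper: both rest on rewriting the inner integral as $\int_{x}^{x+h}f(\tau)\,d\tau$ and invoking the fundamental theorem of calculus for the absolutely continuous $f$, with the $R_v$ case handled by pushing the differentiation through the outer $h$-integral. You simply make explicit the Fubini and absolute-continuity justifications that the paper's computation leaves implicit, which is a welcome tightening rather than a different argument.
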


\begin{lemma}
\label{lm05} Let $0<v<\infty $, $\gamma \in A_{p}$, $1\leq p<\infty $ and $%
f\in L_{p,\gamma }$ be given. Then%
\begin{equation}
v\left\Vert \frac{d}{dx}R_{v}f(x)\right\Vert _{p,\gamma }\leq
C_{5}\left\Vert \left( \mathbb{I}-\mathfrak{T}_{v}\right) f\right\Vert
_{p,\gamma }.  \label{mera}
\end{equation}
\end{lemma}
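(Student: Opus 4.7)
The plan is to reduce everything to the behavior of $h\mapsto (\mathbb{I}-\mathfrak{T}_h)f$ and then exploit Lemma \ref{bukun}, which lets us compare such quantities at different arguments $h\le v\le 1$.

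First I would compute the derivative explicitly. Writing $R_vf(x)=\tfrac{2}{v}\int_{v/2}^{v}\mathfrak{T}_hf(x)\,dh$ and differentiating under the integral sign (justified since $R_vf$ is absolutely continuous, by the remark following Lemma \ref{bukunA}, and $\tfrac{d}{dx}\mathfrak{T}_hf(x)=(f(x+h)-f(x))/h$ a.e.), one obtains
\begin{equation*}
v\,\frac{d}{dx}R_vf(x)\;=\;2\int_{v/2}^{v}\frac{f(x+h)-f(x)}{h}\,dh.
\end{equation*}

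The key identity is then the observation that, since $\frac{d}{dh}[h\,\mathfrak{T}_hf(x)]=\frac{d}{dh}\int_0^h f(x+t)\,dt=f(x+h)$, one has
\begin{equation*}
f(x+h)-f(x)\;=\;-\frac{d}{dh}\bigl[h(\mathbb{I}-\mathfrak{T}_h)f(x)\bigr].
\end{equation*}
Plugging this into the integral and integrating by parts in $h$ with $u=1/h$, $dV=\frac{d}{dh}[h(\mathbb{I}-\mathfrak{T}_h)f(x)]dh$ yields, pointwise a.e. in $x$,
\begin{equation*}
\int_{v/2}^{v}\frac{f(x+h)-f(x)}{h}\,dh \;=\; (\mathbb{I}-\mathfrak{T}_{v/2})f(x) - (\mathbb{I}-\mathfrak{T}_v)f(x) - \int_{v/2}^{v}\frac{(\mathbb{I}-\mathfrak{T}_h)f(x)}{h}\,dh.
\end{equation*}

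Finally I would take the $\|\cdot\|_{p,\gamma}$ norm, apply the triangle inequality to the first two terms and the generalized Minkowski integral inequality to the third, and then invoke Lemma \ref{bukun} with the choices $h=v/2\le v$ and $h\in[v/2,v]$ respectively to control each term by a constant multiple of $\|(\mathbb{I}-\mathfrak{T}_v)f\|_{p,\gamma}$. The integral factor produces an extra $\int_{v/2}^{v}dh/h=\ln 2$, which accounts for the logarithmic term in the stated constant, and collecting everything gives an estimate of the required shape $2(1+aC_2+bC_2\ln 2)\|(\mathbb{I}-\mathfrak{T}_v)f\|_{p,\gamma}$.

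The only real obstacle is bookkeeping: one must check that the differentiation under the integral and the integration by parts in $h$ are admissible for merely $f\in L_{p,\gamma}$ (equivalently, for $f\in L_1$, which holds by the embedding $L_{p,\gamma}\hookrightarrow L_1$ recalled in the section on Steklov means), and one should separately note that for $p=\infty$ Lemma \ref{bukun} is replaced by the uniform-norm Corollary \ref{DIcor}, so the same chain of estimates goes through.
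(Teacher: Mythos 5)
Your argument is correct and ends up in the same place as the paper, but it obtains the key decomposition differently. The paper does not derive the representation of $\frac{d}{dx}R_vf$ itself: it imports from \cite[p.~426]{sh13i} the norm inequality (\ref{laz1}), which bounds $\frac{v}{2}\Vert \frac{d}{dx}R_vf\Vert_{p,\gamma}$ by $\Vert \mathfrak{T}_vf-f-\frac12(\mathfrak{T}_{v/2}f-f)\Vert_{p,\gamma}$ plus an integral term involving $\mathfrak{T}_hf-f$ and $\frac{v}{2h}(\mathfrak{T}_{h/2}f-f)$, and then finishes exactly as you do: triangle inequality, generalized Minkowski, Lemma \ref{bukun} to pull every scale $h\in[v/4,v]$ up to scale $v$, and $\int_{v/2}^v dh/h=\ln 2$. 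You instead derive a clean pointwise identity from scratch, via $f(x+h)-f(x)=-\frac{d}{dh}\bigl[h(\mathbb{I}-\mathfrak{T}_h)f(x)\bigr]$ and integration by parts in $h$, which is self-contained (no reliance on the external citation) and, as a bonus, yields the constant $2(1+72C_2+72C_2\ln 2)$, which is smaller than the stated $2(1+36C_2+144C_2\ln 2)$ since $72(1+\ln 2)<36+144\ln 2$; so your proof does establish the inequality as stated. The measure-theoretic justifications you flag (differentiation under the integral sign for $f\in L_1$, absolute continuity of $h\mapsto h\mathfrak{T}_hf(x)$) are routine and no worse than what the paper tacitly assumes. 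Your closing caveat about $p=\infty$ is well taken: Lemma \ref{bukun} is stated only for $p<\infty$, and the case $p=\infty$ (where $\gamma\equiv 1$) needs Corollary \ref{DIcor} instead --- a gap the paper's own proof shares, so noting it is a point in your favor rather than a defect.
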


We set $R_{v}^{r}f:=\left( R_{v}f\right) ^{r}.$

\begin{lemma}
\label{da} Let $0<v<\infty $, $r-1\in \mathbb{N}$, $1\leq p<\infty $, $%
\gamma \in A_{p}$ and $f\in L_{p,\gamma }$ be given. Then%
\begin{equation}
\frac{d^{r}}{dx^{r}}R_{v}^{r}f\left( x\right) =\frac{d}{dx}R_{v}\frac{d^{r-1}%
}{dx^{r-1}}R_{v}^{r-1}f\left( x\right) ,\quad x\in \mathbb{T}.  \label{res}
\end{equation}
\end{lemma}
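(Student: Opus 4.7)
The goal is to prove $\frac{d^r}{dx^r}R_v^r f = \frac{d}{dx} R_v \frac{d^{r-1}}{dx^{r-1}} R_v^{r-1} f$. The natural approach is to split off the outermost $R_v$ by writing $R_v^r f = R_v(R_v^{r-1}f)$ and then commute $r-1$ of the $\frac{d}{dx}$'s through that $R_v$ using the identity $\frac{d}{dx}R_v g = R_v \frac{d}{dx} g$ supplied by Lemma \ref{deg}. Since Lemma \ref{deg} requires $g \in W^{1}_{p,\gamma}$, the only delicate point is to ensure enough regularity is available at every stage.

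\textbf{Step 1 (regularity).} First I would establish by induction on $n \geq 1$ that for every $f \in L_{p,\gamma}$ one has $R_v^n f \in W^{n}_{p,\gamma}$ and
\[
(R_v^n f)^{(k)} = R_v \bigl[(R_v^{n-1} f)^{(k)}\bigr], \qquad 0 \leq k \leq n-1.
\]
The base case $n=1$ is exactly Lemma \ref{lm05}. In the inductive step, the displayed formula follows by iterating Lemma \ref{deg}, which is applicable because $(R_v^{n-1}f)^{(j)} \in W^{1}_{p,\gamma}$ for $0 \leq j \leq n-2$ by the inductive hypothesis; the missing top-order derivative $(R_v^n f)^{(n)}$ is then produced by one final application of Lemma \ref{lm05} to $R_v\bigl[(R_v^{n-1}f)^{(n-1)}\bigr] = (R_v^n f)^{(n-1)} \in L_{p,\gamma}$.

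\textbf{Step 2 (main identity).} Set $h := R_v^{r-1}f$; by Step 1, $h \in W^{r-1}_{p,\gamma}$, so $h, h', \ldots, h^{(r-1)}$ all lie in $L_{p,\gamma}$. Writing $R_v^r f = R_v h$ and iterating Lemma \ref{deg} a total of $r-1$ times yields
\[
\frac{d^{r-1}}{dx^{r-1}}(R_v h) = R_v\, h^{(r-1)} = R_v \frac{d^{r-1}}{dx^{r-1}} R_v^{r-1} f,
\]
each iteration being legitimate because $h^{(j)} \in W^{1}_{p,\gamma}$ for $0 \leq j \leq r-2$. Applying one more $\frac{d}{dx}$ on the outside gives
\[
\frac{d^r}{dx^r} R_v^r f = \frac{d}{dx}\!\left(R_v \frac{d^{r-1}}{dx^{r-1}} R_v^{r-1} f\right),
\]
which is the claimed formula.

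The only real obstacle is the derivative bookkeeping in Step 1: at each application of Lemma \ref{deg} one must know that the inside function has the required Sobolev regularity, and this is handled by a single induction whose key inputs are Lemmas \ref{lm05} and \ref{deg}. Once Step 1 is in place, Step 2 is essentially mechanical.
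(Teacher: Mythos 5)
Your proof is correct and follows essentially the same route as the paper: both arguments reduce to iterating the commutation identity of Lemma \ref{deg} through an induction, the only difference being that you peel off the outermost copy of $R_{v}$ and make the Sobolev regularity of $R_{v}^{n}f$ an explicit inductive claim, whereas the paper peels off the innermost copy and leaves the regularity bookkeeping implicit. Your Step 1 is in fact a welcome clarification of a point the paper glosses over.
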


We give the proof of required lemmas and Theorem \ref{reaa}.

\begin{proof}[Proof of Lemma \protect\ref{bukunA}]
If $f\in L_{p,\gamma }$, using generalized Minkowski's integral inequality
and Lemma \ref{bukun} we obtain%
\begin{equation*}
\left\Vert f-R_{v}f\right\Vert _{p,\gamma }=\left\Vert \frac{2}{v}%
\int\nolimits_{v/2}^{v}\left( \frac{1}{h}\int\nolimits_{0}^{h}\left( f\left(
x+t\right) -f\left( x\right) \right) dt\right) dh\right\Vert _{p,\gamma }
\end{equation*}%
\begin{equation*}
=\left\Vert \frac{2}{v}\int\nolimits_{v/2}^{v}\left( \mathfrak{T}_{h}f\left(
x\right) -f\left( x\right) \right) dh\right\Vert _{p,\gamma }\leq \frac{2}{v}%
\int\nolimits_{v/2}^{v}\left\Vert \mathfrak{T}_{v}f-f\right\Vert _{p,\gamma
}dh
\end{equation*}%
\begin{eqnarray}
&\leq &1728\mathbf{c}_{0}\mathbf{c}_{0}^{\prime \prime }\mathbf{c}%
_{0}^{\prime \prime \prime }\left\Vert \mathfrak{T}_{v}f-f\right\Vert
_{p,\gamma }\frac{2}{v}\int\nolimits_{v/2}^{v}dh  \label{b1} \\
&=&1728\mathbf{c}_{0}\mathbf{c}_{0}^{\prime \prime }\mathbf{c}_{0}^{\prime
\prime \prime }\left\Vert \left( I-\mathfrak{T}_{v}\right) f\right\Vert
_{p,\gamma }.  \notag
\end{eqnarray}
\end{proof}

\begin{proof}[Proof of Lemma \protect\ref{deg}]
The first result follows from%
\begin{equation*}
\frac{d}{dx}R_{v}f(x)=\frac{d}{dx}\left( \frac{2}{v}\int\nolimits_{v/2}^{v}%
\left( \frac{1}{h}\int\nolimits_{0}^{h}f\left( x+t\right) dt\right) dh\right)
\end{equation*}%
\begin{equation*}
=\frac{d}{dx}\left( \frac{2}{v}\int\nolimits_{v/2}^{v}\left( \frac{1}{h}%
\int\nolimits_{x}^{x+h}f\left( \tau \right) d\tau \right) dh\right)
\end{equation*}%
\begin{equation*}
=\left( \frac{2}{v}\int\nolimits_{v/2}^{v}\left( \frac{1}{h}%
\int\nolimits_{0}^{h}\frac{d}{dx}f\left( x+t\right) dt\right) dh\right)
=R_{v}\frac{d}{dx}f\left( x\right) .
\end{equation*}%
For the second one we find%
\begin{eqnarray*}
\frac{d}{dx}\mathfrak{T}_{v}f(x) &=&\frac{d}{dx}\left( \frac{1}{h}%
\int\nolimits_{0}^{h}f\left( x+t\right) dt\right) =\frac{d}{dx}\left( \frac{1%
}{h}\int\nolimits_{x}^{x+h}f\left( \tau \right) d\tau \right) \\
&=&\frac{1}{h}\int\nolimits_{x}^{x+h}\frac{d}{dx}f\left( \tau \right) d\tau =%
\mathfrak{T}_{v}\frac{d}{dx}f(x).
\end{eqnarray*}
\end{proof}

\begin{proof}[Proof of Lemma \protect\ref{lm05}]
Using%
\begin{equation*}
\left\Vert F_{\delta \left( \mathfrak{R}_{\delta }f\right) ^{\prime
},G}\right\Vert _{C\left( T \right) }=\left\Vert \delta
\left( F_{\left( \mathfrak{R}_{\delta }f\right) ,G}\right) ^{\prime
}\right\Vert _{C\left( T \right) }=\delta \left\Vert \left( 
\mathfrak{R}_{\delta }(F_{f,G})\right) ^{\prime }\right\Vert _{C\left( T\right) }
\end{equation*}%
\begin{equation*}
\leq \cdots \leq 2\left( 37+146\ln 2^{36}\right) \left\Vert \left(
I-T_{\delta }\right) (F_{f,G})\right\Vert _{C\left( T \right) }
\end{equation*}%
\begin{equation*}
=2\left( 37+146\ln 2^{36}\right) \left\Vert (F_{\left( I-T_{\delta }\right)
f,G})\right\Vert _{C\left( T \right) }
\end{equation*}%
we conclude from TR that%
\begin{equation*}
\delta \left\Vert (\mathfrak{R}_{\delta }f)^{\prime }\right\Vert _{p,\gamma
}\leq 48\mathbf{c}_{0}\mathbf{c}_{0}^{\prime \prime }\mathbf{c}_{0}^{\prime
\prime \prime }\left( 37+146\ln 2^{36}\right) \left\Vert \left( I-T_{\delta
}\right) f\right\Vert _{p,\gamma }.
\end{equation*}
\end{proof}

\begin{proof}[Proof of Lemma \protect\ref{da}]
For $r=2$, by Lemma \ref{deg},%
\begin{eqnarray*}
\frac{d^{2}}{dx^{2}}R_{v}^{2}f &=&\frac{d}{dx}\frac{d}{dx}R_{v}R_{v}f=\frac{d%
}{dx}\frac{d}{dx}R_{v}\Psi \qquad \left[ \Psi :=R_{v}f\right] \\
&=&\frac{d}{dx}R_{v}\frac{d}{dx}\Psi =\frac{d}{dx}R_{v}\frac{d}{dx}R_{v}f
\end{eqnarray*}%
and the result (\ref{res}) follows. For $r=3$, by Lemma \ref{deg},%
\begin{equation*}
\frac{d^{3}}{dx^{3}}R_{v}^{3}f=\frac{d}{dx}\frac{d^{2}}{dx^{2}}%
R_{v}^{2}R_{v}f=\frac{d}{dx}\frac{d^{2}}{dx^{2}}R_{v}^{2}\Psi =\frac{d}{dx}%
\frac{d}{dx}R_{v}\frac{d}{dx}R_{v}\Psi
\end{equation*}%
\begin{equation*}
=\frac{d}{dx}\frac{d}{dx}R_{v}\frac{d}{dx}R_{v}^{2}f=\frac{d}{dx}R_{v}\frac{d%
}{dx}\frac{d}{dx}R_{v}^{2}f=\frac{d}{dx}R_{v}\frac{d^{2}}{dx^{2}}R_{v}^{2}f
\end{equation*}%
and (\ref{res}) holds. Let (\ref{res}) holds for $k\in \mathbb{N}$:%
\begin{equation}
\frac{d^{k}}{dx^{k}}R_{v}^{k}f=\frac{d}{dx}R_{v}\frac{d^{k-1}}{dx^{k-1}}%
R_{v}^{k-1}f.  \label{bula}
\end{equation}%
Then, for $k+1$, (\ref{bula}) and Lemma \ref{deg} implies that%
\begin{equation*}
\frac{d^{k+1}}{dx^{k+1}}R_{v}^{k+1}f=\frac{d}{dx}\frac{d^{k}}{dx^{k}}%
R_{v}^{k}R_{v}f=\frac{d}{dx}\frac{d^{k}}{dx^{k}}R_{v}^{k}\Psi =\frac{d}{dx}%
\frac{d}{dx}R_{v}\frac{d^{k-1}}{dx^{k-1}}R_{v}^{k-1}\Psi
\end{equation*}%
\begin{equation*}
=\frac{d}{dx}\frac{d}{dx}R_{v}\frac{d^{k-1}}{dx^{k-1}}R_{v}^{k}f=\frac{d}{dx}%
R_{v}\frac{d}{dx}\frac{d^{k-1}}{dx^{k-1}}R_{v}^{k}f=\frac{d}{dx}R_{v}\frac{%
d^{k}}{dx^{k}}R_{v}^{k}f.
\end{equation*}
\end{proof}

\begin{proof}[Proof of Theorem \protect\ref{reaa}]
For $r=1,2,3,\ldots $ we consider the operator (\cite{ra11u})%
\begin{equation*}
A_{\delta }^{r}:=\mathbb{I}-\left( \mathbb{I}-R_{v}^{r}\right) ^{r}.
\end{equation*}%
From the identity $\mathbb{I}-R_{v}^{r}=\left( \mathbb{I}-R_{v}\right)
\sum\nolimits_{j=0}^{r-1}R_{v}^{j}$ we find%
\begin{equation*}
\left\Vert \left( \mathbb{I}-R_{v}^{r}\right) g\right\Vert _{p,\gamma }\leq
\left( \sum\limits_{j=0}^{r-1}\left( 1728\mathbf{c}_{0}\mathbf{c}%
_{0}^{\prime \prime }\mathbf{c}_{0}^{\prime \prime \prime }\right)
^{j}\right) \left\Vert \left( \mathbb{I}-R_{v}\right) g\right\Vert
_{p,\gamma }
\end{equation*}%
\begin{eqnarray}
&\leq &\left( 1728\mathbf{c}_{0}\mathbf{c}_{0}^{\prime \prime }\mathbf{c}%
_{0}^{\prime \prime \prime }\left( \sum\limits_{j=0}^{r-1}\left( 1728\mathbf{%
c}_{0}\mathbf{c}_{0}^{\prime \prime }\mathbf{c}_{0}^{\prime \prime \prime
}\right) ^{j}\right) \right) \left\Vert \left( \mathbb{I}-\mathfrak{T}%
_{v}\right) g\right\Vert _{p,\gamma }  \label{fdd} \\
&=&C_{4}\left\Vert \left( \mathbb{I}-\mathfrak{T}_{v}\right) g\right\Vert
_{p,\gamma }  \notag
\end{eqnarray}%
when $0<v<\infty $, $\gamma \in A_{p},1\leq p<\infty $ and $g\in L_{p,\gamma
}$. Since $\left\Vert f-A_{v}^{r}f\right\Vert _{p,\gamma }=\left\Vert \left( 
\mathbb{I}-R_{v}^{r}\right) ^{r}f\right\Vert _{p,\gamma }$, recursive
procedure gives%
\begin{equation*}
\left\Vert f-A_{v}^{r}f\right\Vert _{p,\gamma }=\left\Vert \left( \mathbb{I}%
-R_{v}^{r}\right) ^{r}f\right\Vert _{p,\gamma }=\left\Vert \left( \mathbb{I}%
-R_{v}^{r}\right) \left( \mathbb{I}-R_{v}^{r}\right) ^{r-1}f\right\Vert
_{p,\gamma }
\end{equation*}%
\begin{equation*}
\overset{\left( \text{\ref{fdd}}\right) }{\leq }C_{4}\left\Vert \left( 
\mathbb{I}-\mathfrak{T}_{v}\right) \left( \mathbb{I}-R_{v}^{r}\right)
^{r-1}f\right\Vert _{p,\gamma }\overset{\left( \text{\ref{fdd}}\right) }{%
\leq }\cdots 
\end{equation*}%
\begin{equation*}
\overset{\left( \text{\ref{fdd}}\right) }{\leq }C_{4}^{2}\left\Vert \left( 
\mathbb{I}-\mathfrak{T}_{v}\right) ^{2}\left( \mathbb{I}-R_{v}^{r}\right)
^{r-2}f\right\Vert _{p,\gamma }\overset{\left( \text{\ref{fdd}}\right) }{%
\leq }\cdots 
\end{equation*}%
\begin{equation*}
\overset{\left( \text{\ref{fdd}}\right) }{\leq }C_{4}^{3}\left\Vert \left( 
\mathbb{I}-\mathfrak{T}_{v}\right) ^{3}\left( \mathbb{I}-R_{v}^{r}\right)
^{r-3}f\right\Vert _{p,\gamma }\overset{\left( \text{\ref{fdd}}\right) }{%
\leq }\cdots \overset{\left( \text{\ref{fdd}}\right) }{\leq }%
C_{4}^{r}\left\Vert \left( \mathbb{I}-\mathfrak{T}_{v}\right)
^{r}f\right\Vert _{p,\gamma }.
\end{equation*}%
On the other hand, using (\ref{mera}), Lemmas \ref{da} and \ref{deg},
recursively,%
\begin{equation*}
v^{r}\left\Vert \frac{d^{r}}{dx^{r}}R_{v}^{r}f\right\Vert _{p,\gamma
}=v^{r-1}v\left\Vert \frac{d}{dx}R_{v}\frac{d^{r-1}}{dx^{r-1}}%
R_{v}^{r-1}f\right\Vert _{p,\gamma }
\end{equation*}%
\begin{equation*}
\leq C_{5}v^{r-1}\left\Vert \left( \mathbb{I}-\mathfrak{T}_{v}\right) \frac{%
d^{r-1}}{dx^{r-1}}R_{v}^{r-1}f\right\Vert _{p,\gamma }
\end{equation*}%
\begin{equation*}
=C_{5}v^{r-2}v\left\Vert \frac{d^{r-1}}{dx^{r-1}}R_{v}^{r-1}\left( \mathbb{I}%
-\mathfrak{T}_{v}\right) f\right\Vert _{p,\gamma }
\end{equation*}%
\begin{equation*}
=C_{5}v^{r-2}v\left\Vert \frac{d}{dx}R_{v}\frac{d^{r-2}}{dx^{r-2}}%
R_{v}^{r-2}\left( \mathbb{I}-\mathfrak{T}_{v}\right) f\right\Vert _{p,\gamma
}
\end{equation*}%
\begin{equation*}
\leq C_{5}^{2}v^{r-2}\left\Vert \left( \mathbb{I}-\mathfrak{T}_{v}\right) 
\frac{d^{r-2}}{dx^{r-2}}R_{v}^{r-2}\left( \mathbb{I}-\mathfrak{T}_{v}\right)
f\right\Vert _{p,\gamma }
\end{equation*}%
\begin{equation*}
=C_{5}^{2}v^{r-2}\left\Vert \frac{d^{r-2}}{dx^{r-2}}R_{v}^{r-2}\left( 
\mathbb{I}-\mathfrak{T}_{v}\right) ^{2}f\right\Vert _{p,\gamma }
\end{equation*}%
\begin{equation*}
\leq \cdots \leq C_{5}^{r-1}v\left\Vert \frac{d}{dx}R_{v}\left( \mathbb{I}-%
\mathfrak{T}_{v}\right) ^{r-1}f\right\Vert _{p,\gamma }\leq
C_{5}^{r}\left\Vert \left( \mathbb{I}-\mathfrak{T}_{v}\right)
^{r}f\right\Vert _{p,\gamma }.
\end{equation*}%
Thus%
\begin{equation*}
K_{r}\left( f,v,p,\gamma \right) _{p,\gamma }\leq \left\Vert
f-A_{v}^{r}f\right\Vert _{p,\gamma }+v^{r}\left\Vert \frac{d^{r}}{dx^{r}}%
A_{v}^{r}f\left( x\right) \right\Vert _{p,\gamma }
\end{equation*}%
\begin{equation*}
\leq 2\max \left\{ C_{4}^{r},C_{5}^{r}\right\} \left\Vert \left( \mathbb{I}-%
\mathfrak{T}_{v}\right) ^{r}f\right\Vert _{p,\gamma }.
\end{equation*}%
For the reverse of the last inequality, when $g\in W_{p,\gamma }^{r}$, (from
Lemma \ref{lem1})%
\begin{equation*}
\Omega _{r}\left( f,v\right) _{p,\gamma }\leq \left( 1+24\mathbf{c}_{0}%
\mathbf{c}_{0}^{\prime \prime }\mathbf{c}_{0}^{\prime \prime \prime }\right)
^{r}\left\Vert f-g\right\Vert _{p,\gamma }+\Omega _{r}\left( g,v\right)
_{p,\gamma }
\end{equation*}%
\begin{equation}
\leq \left( 1+24\mathbf{c}_{0}\mathbf{c}_{0}^{\prime \prime }\mathbf{c}%
_{0}^{\prime \prime \prime }\right) ^{r}\left\Vert f-g\right\Vert _{p,\gamma
}+(12\mathbf{c}_{0}\mathbf{c}_{0}^{\prime \prime }\mathbf{c}_{0}^{\prime
\prime \prime })^{r}v^{r}\left\Vert g^{\left( r\right) }\right\Vert
_{p,\gamma },  \label{mn}
\end{equation}%
and taking infimum on $g\in W_{p,\gamma }^{r}$ in (\ref{mn}) we get%
\begin{equation*}
\Omega _{r}\left( f,v\right) _{p,\gamma }\leq C_{2}^{-1}K_{r}\left(
f,v,p,\gamma \right) _{p,\gamma }
\end{equation*}%
and hence (\ref{real}) holds.
\end{proof}

\begin{proof}[Proof of Theorem \protect\ref{cov}]
If $r\in \mathbb{N}$, $1\leq p<\infty $, $\gamma \in A_{p}$, $f\in
L_{p,\gamma }$, then%
\begin{equation*}
\Omega _{r}\left( f,\lambda v\right) _{p,\gamma }\leq 2\left( 1+24\mathbf{c}%
_{0}\mathbf{c}_{0}^{\prime \prime }\mathbf{c}_{0}^{\prime \prime \prime
}\right) ^{r}C_{3}\left( 1+\lfloor \lambda \rfloor \right) ^{r}\Omega
_{r}\left( f,v\right) _{p,\gamma },\quad v,\lambda >0.
\end{equation*}%
and hence%
\begin{equation*}
\Omega _{k}\left( f,v\right) _{p,\gamma }v^{-k}\leq C_{7}\Omega _{k}\left(
f,\delta \right) _{p,\gamma }\delta ^{-k},\quad 0<\delta \leq v.
\end{equation*}
\end{proof}

ADDRESS: BALIKES\.{I}R UNIVERSITY, FACULTY OF ARTS AND SCIENCES, DEPARTMENT
OF MATHEMATICS, \c{C}A\u{G}I\c{S} YERLE\c{S}KES\.{I}, 10145, BALIKES\.{I}R,
TURKEY.

rakgun@balikesir.edu.tr

\end{document}